\documentclass[reqno]{amsart}
\usepackage[pxpazo, numberbasedon=subsection]{zzzams}
\usepackage{bbm}

\usepackage[nomarkers, nolists]{endfloat} % Put figures to the end

\DeclareMathOperator{\Bh}{\mathbbm{h}}

\DeclareMathOperator{\Sat}{Sat}

\newcommand{\ubar}[1]{\underline{#1}}
\DeclareMathOperator{\cl}{\mathbf{cl}}
\DeclareMathOperator{\AV}{AV}

\newcommand{\xeq}[1]{\overset{\mathrm{#1}}{=\joinrel=}}

\newcommand{\laurent}[1]{(\!({\ensuremath{#1}})\!)}
\newcommand{\series}[1]{[\![{\ensuremath{#1}}]\!]}

\title{Associated varieties of simple affine vertex algebras at rational levels}

\author{Peng Shan$^{1,2}$}
\author{Wenbin Yan$^1$} 
\author{Qixian Zhao$^1$}

\address{\scriptsize{$^1$} Yau Mathematical Sciences Center, Tsinghua University, Beijing, 100084, China}
\address{\scriptsize{$^2$} Department of Mathematical Sciences, Tsinghua University, Beijing, 100084, China}

\dedicatory{Dedicated to George Lusztig on the occasion of his 80th birthday}

\begin{document}

\begin{abstract}
	We present a conjecture for associated varieties of simple affine vertex algebras $L_k(\fg)$ attached to a simple Lie algebra $\fg$ of simply-laced type and any rational level $k$ greater than the critical level. The key new ingredient compared to the integral case is the covering duality map introduced by Gao-Liu-Lo-Shahidi. We provide evidence for the conjecture.
\end{abstract}

\maketitle

\tableofcontents

%============
\section{Introduction}

Let $\fg$ be a simple Lie algebra over $\BC$ and let $k \in \BQ$. Let $L_k = L_k(\fg)$ be the unique simple quotient of the universal affine vertex algebra $V^k(\fg)$ at level $k$. It comes equipped with a geometric invariant called \textit{associated variety} $X_{L_k(\fg)}$, see \S \ref{subsec:AV-def}, which is a $G$-stable conic Poisson subvariety in $\fg$ (we identify $\fg^*$ and $\fg$ using an invariant form).

Let $\check \Bh$ be the dual Coxeter number of $\fg$, and let $k + \check \Bh = \frac mu$ in lowest terms. Then $X_{L_k}$ is equal to $\fg$ if $m < 0$, to the nilpotent cone $\cN$ in $\fg$ if $m = 0$, and to the closure of a nilpotent orbit if $k$ is admissible \cite{Arakawa:C2} (when $\fg$ is simply-laced, $k$ admissible means $m \ge \check \Bh$). In the remaining cases, the behavior of $X_{L_k(\fg)}$ is very interesting but mysterious, and is currently known only in sporadic examples and subfamilies \cite{Arakawa-Moreau:Omin, Arakawa-Moreau:sheets, Arakawa-Moreau:irred, AFK, Jiang-Song, ADFLM}. The purpose of this paper is to propose a general conjectural description of $X_{L_k(\fg)}$ for rational $k> - \check \Bh$ when $\fg$ is of simply-laced types, and to provide evidence for the conjecture.

At non-admissible levels $k$, the $L_k(\fg)$'s are important from physics point of view. For example, the $L_k(\fg)$'s sometimes appear in the 4d/VOA correspondences \cite{BLLPRvR} attached to certain 4d $\cN =2$ superconformal field theories $\cT$, and it is conjectured that their associated varieties agree with the Higgs branches of the physical theories \cite{Beem-Rasterlli:VOA-Higgs}. % The vertex algebras arising this context must be \textit{quasi-lisse} \cite{Arakawa-Kawasetsu}, meaning that their associated varieties must have finitely many symplectic leaves. In the case of $L_k(\fg)$, it is quasi-lisse if and only if $X_{L_k(\fg)}$ is contained in the nilpotent cone $\cN$. It is therefore important to determine when this happens.
The associated varieties $X_{L_k(\fg)}$ are also important in the representation theory of the affine Lie algebra $\hat \fg$. In the finite dimensional setting, associated varieties of simple highest weight modules have close connections with primitive ideals, Springer fibers, and Kazhdan-Luszig cells, see for example \cite{Lusztig:char,Barbasch-Vogan:unipotent}. When $k$ is non-admissible, the $\hat \fg$-module $L_k(\fg) = L(k \Lambda_0)$ is one of the first examples of a $G\series{t}$-equivariant simple object in a singular block of category $\cO$. It is therefore natural to expect that $X_{L_k(\fg)}$ should fit into a picture involving cells in the affine Weyl group. In the prequel \cite{SYZ:cl} to this paper, we initiated the first attempts in building such a picture and proposed a conjectural description for $X_{L_k(\fg)}$ when $\fg$ is simply-laced and $k$ is a non-admissible integer.

Let us review the precise statements. Let $\fg$ be of simply-laced type, and let $k + \check \Bh = m \ge 1$ be an integer. The conjecture in \textit{op. cit.} is phrased in terms of a nilpotent orbit $\check \BO(m)$ in the Langlands dual Lie algebra $\check \fg$ attached to $m$. The definition of $\check \BO(m)$ and its role in Langlands duality are recalled in \S \ref{subsec:cl}. Write $\check \BO(m)$ as a saturation $\Sat_{\check L}^{\check G} \BO_{\check L}$ from a Bala-Carter Levi $\check \fl$ of $\check \BO(m)$ (that is, a minimal Levi subalgebra among those intersecting $\check \BO(m)$ nontrivially). Let $\fp = \fl \oplus \fu$ be a parabolic subalgebra containing the Langlands dual $\fl$ of $\check \fl$, and let $\fz(\fl)$ be the center of $\fl$. Let $\bd$ (resp. $\bd_L$) denote the Barbasch-Vogan-Lusztig-Spaltenstein duality map from nilpotent orbits of $\check \fg$ (resp. $\check \fl$) to those in $\fg$ (resp. $\fl$) \cite{Lusztig:sp-1,Barbasch-Vogan:unipotent}.

\begin{conjecture}[{\cite[Conjecture 1.0.2]{SYZ:cl}}]\label{conj:int}
	In the above setup,
	\begin{equation*}
		X_{L_k(\fg)} = \overline{\Ad G \cdot (\bd_L \BO_{\check L} \times \fz(\fl) \times \fu)}.
	\end{equation*}
	In particular, 
	\begin{equation*}
		X_{L_k(\fg)} \cap \cN = \overline{\bd \check \BO(m)},
	\end{equation*}
	and $X_{L_k(\fg)} \subseteq \cN$ if and only if $\check \BO(m)$ is distinguished.
\end{conjecture}

We now turn to rational levels. As before, for $k \in \BQ$ above critical level we write $k + \check \Bh = \frac mu$ in lowest terms with $m \ge 1$. We replace the Langlands dual $\check \fg$ by the \textit{metaplectic dual} $\check \fg^{(u)}$, see \S \ref{subsec:mdual}. This is a Lie algebra  isomorphic to $\check \fg$ when $\fg$ is simply-laced. Again, we let $\check \BO(m)^{(u)} = \Sat_{\check L^{(u)}}^{\check G^{(u)}} \BO_{\check L^{(u)}}$ be the orbit in $\check \fg^{(u)}$ attached to $m$, written as a saturation from its Bala-Carter Levi $\check L^{(u)}$. Finally, we replace the duality map $\bd$ by the \textit{covering duality map} defined by Gao-Liu-Lo-Shahidi \cite{Gao-Liu-Lo-Shahidi}
\begin{equation*}
	\bd^{(u)}: \ubar{\check \cN}^{(u)} \aro \ubar \cN
\end{equation*}
from nilpotent orbits in $\check \fg^{(u)}$ to those in $\fg$ (see \S \ref{subsec:mdual}). The Levi version of this map $\bd_L^{(u)}$ sends orbits in $\check \fl^{(u)}$ to orbits in a Levi $\fl$ of $\fg$. The following is the main conjecture of this paper.

\begin{conjecture}
	In the above situation, 
	\begin{equation*}
		X_{L_k(\fg)} = \overline{\Ad G \cdot (\bd_L^{(u)} \BO_{\check L^{(u)}} \times \fz(\fl) \times \fu)}.
	\end{equation*}
	In particular, 
	\begin{equation*}
		X_{L_k(\fg)} \cap \cN = \overline{\bd^{(u)} \check \BO(m)^{(u)}},
	\end{equation*}
	and $X_{L_k(\fg)} \subseteq \cN$ if and only if $\check \BO(m)^{(u)}$ is distinguished.
\end{conjecture}

This appears as Conjecture \ref{conj:main} below. It is worth noticing that $\check \BO(m)^{(u)}$ is distinguished if and only if $\check \BO(m)$ is. As a result, the quasi-lisse-ness of $L_k(\fg)$, which is equivalent to whether $X_{L_k(\fg)} \subseteq \cN$ (see \ref{subsec:AV-def}), depends only on $m$, not on $u$.

The appearance of the metaplectic dual and the covering duality map deserves some justification. For this, it is helpful to briefly review the work \cite{Gao-Liu-Lo-Shahidi}. Let $G$ be a $p$-adic group, and let $\bar G^{(u)}$ be a $u$-fold cover of $G$. For any irreducible genuine smooth representation $\pi$ of $\bar G^{(u)}(\BQ_p)$, its geometric wavefront set $\operatorname{WF}^{geom}(\pi)$ is a subvariety in the nilpotent cone $\cN(\fg(\bar \BQ_p))$, which can also be identified with a subvariety in $\cN(\fg(\BC))$. The main goal of \textit{op. cit.} is to formulate a conjectural upper bound for $\operatorname{WF}^{geom}(\pi)$ in terms of the hypothetical L-parameter $\phi_{\operatorname{AZ}(\pi)}: \operatorname{WD}_{\BQ_p} \to {}^L \bar G^{(u)}$ of $\operatorname{AZ}(\pi)$. Here $\operatorname{AZ}(\pi)$ is the Aubert-Zelevinskii dual of $\pi$, $\operatorname{WD}_{\BQ_p}$ is the Weil-Deligne group, and ${}^L \bar G^{(u)}$ is the L-group attached to $\bar G^{(u)}$. By the theorem of Jacobson-Morozov, the restriction of $\phi_{\operatorname{AZ}(\pi)}$ to the $\SL_2$ inside $\operatorname{WD}_{\BQ_p}$ produces a nilpotent orbit $\BO(\phi_{\operatorname{AZ}(\pi)})$ in the metaplectic dual $\check \fg^{(u)}$. Their conjecture then says 
\begin{equation*}
	\operatorname{WF}^{geom}(\pi) \subseteq \overline{\bd^{(u)} \,\BO(\phi_{\operatorname{AZ}(\pi)}) }.
\end{equation*}

Under the analogy between the representation theories of $G(\BQ_p)$ and $G(\BC\laurent{t})$, genuine representations $\pi$ of the $u$-fold cover $\bar G^{(u)}(\BQ_p)$ correspond to modules $M$ over the affine Lie algebra $\fg_{aff}$ of a certain rational level $k$, with $k$ having denominator $u$. The geometric wavefront set $\operatorname{WF}^{geom}(\pi)$ then parallels the associated variety $X_M$ of $M$, and for $M = L(k \Lambda_0) = L_k(\fg)$, the orbit $\check \BO(m)^{(u)}$ serves as an analog to $\BO(\phi_{\operatorname{AZ}(\pi)})$. As a first concrete example, $L_k(\fg)$ at admissible levels are direct affine analogs of the theta representations $\Theta$ of $\bar G^{(u)}$, and we have literally
\begin{equation*}
	\operatorname{WF}^{geom}(\Theta) = \overline{\bd^{(u)} \check \BO_{reg}^{(u)}} = \overline{\bd^{(u)} \check \BO(m)^{(u)}} = \overline{\BO(u)} = X_{L_k(\fg)}
\end{equation*}
when $p$ is large enough, see \cite[Theorem 1.1]{Karasiewicz-Okada-Wang} and \cite{Arakawa:C2}. This example will be revisited in \ref{cls:adm}.

%The paper is organized as follows. We 

%-------
\subsection{Acknowledgement}

We would like to thank Tomoyuki Arakawa, Baohua Fu, Fan Gao, Cuibo Jiang, Anne Moreau, and Shilin Yu for valuable discussions. Q. Zhao further thanks Shilin Yu and Tianyuan Mathematical Center in Southeast China for organizing and inviting him to the conference \textit{Representation Theory of Lie Groups and Related Topics} at Xiamen, China, 2025 where some of the inspirations for this work originated. 

This work is supported by National Key R\&D Program of China (No. 2025YFA1017400).
PS is also supported by NSFC Grant 12225108 and by the New Cornerstone Science Foundation through the Xplorer Prize.

%===========
\section{Preliminaries}

%---------
\subsection{Associated varieties}\label{subsec:AV-def}

\begin{clause}[Lie theoretic notations]
	We let $\fg$ be a simple Lie algebra over $\BC$ with a Cartan subalgebra $\fh$ and a Borel $\fb \supseteq \fh$. We let $\Bh$ (resp. $\check \Bh$) be the Coxeter number (resp. dual Coxeter number) of $\fg$. Write $\rho$ for the half sum of positive roots with respect to the Borel $\fb$. The \textit{affine Lie algebra} is a central extension
	\begin{equation*}
		0 \aro \BC K \aro \fg_{aff} \aro \fg\laurent{t} \aro 0.
	\end{equation*}
	Such extensions are determined by a nondegenerate invariant symmetric bilinear form $B(-,-)$ on $\fg$, which we fix to be the unique one such that the induced form on $\fh^*$ satisfies $B(\theta, \theta) = 2$ for the highest root $\theta$. We fix the Cartan subalgebra $\fh_{aff} = \fh \oplus \BC K$ and write its dual as $\fh_{aff}^* = \fh^* \oplus \BC \Lambda_0$ where $\Lambda_0$ is dual to $K$. 
	
	We write $W_{aff}$ for the affine Weyl group, with a set of simple reflections determined by the Borel $\fb$. It acts naturally on $\fh_{aff}^*$. We write $\cN$ for the nilpotent cone in $\fg$, and write $\ubar \cN$ for the set of $\Ad G$-orbits in $\cN$.
\end{clause}

\begin{clause}[Affine vertex algebras]
	For the definition of vertex algebras and its properties, we refer readers to \cite{Kac:VA, Frenkel-Ben-Zvi, Arakawa:W-alg-notes}. Given a vertex algebra $V$ and $a \in V$, we write $Y(a,z) = \sum_{n \in \BZ} a_{(n)} z^{-n-1} \in \End(V) \series{z,z\inv}$ for the corresponding field.
	
	For a number $k \in \BC$, let
	\begin{equation*}
		V^k = V^k(\fg) = \cU(\fg_{aff}) \dotimes_{\cU(\fg\series{t}) \oplus \,\BC K} \BC_k,
	\end{equation*}
	where $\fg\series{t}$ acts trivially on $\BC_k$ and $K$ acts on $\BC_k$ by $k$. This is a highest weight module of $\fg_{aff}$ with highest weight $k \Lambda_0$ and it has the structure of a vertex algebra (see, for example, \cite{Frenkel-Zhu} or \cite[\textsection 2.4]{Frenkel-Ben-Zvi}). 
	The unique irreducible quotient $L(k \Lambda_0)$ of $V^k$ as a $\fg_{aff}$-module inherits a structure of a vertex algebra from the one on $V^k$. We denote the resulting vertex algebra by 
	\begin{equation*}
		L_k = L_k(\fg).
	\end{equation*}
	
	For a vertex algebra $V$, \textit{Zhu's $C_2$-algebra of $V$} is the ring
	\begin{equation*}
		R_V = V/ \{a_{(-2)} b \mid a,b \in V\}
	\end{equation*}
	with multiplication given by $a \cdot b = a_{(-1)} b$ and a Poisson bracket given by $\{a,b\} = a_{(0)}b$ \cite{Zhu:thesis}. The subspace $\{a_{(-2)} b \mid a,b \in V\}$ is usually denoted by $C_2(V)$, and it coincides with the first piece $F^1 V$ in the filtration on $V$ defined by Li \cite{Li:abVA}.	The \textbf{associated variety} of $V$ is Poisson variety
	\begin{equation*}
		X_V := (\Spec R_V )_{red}.
	\end{equation*}
	We say $V$ is \textit{quasi-lisse} if the Poisson variety $X_V$ has finitely many symplectic leaves.
	
	We have $X_{V^k} = \fg^*$ which can (and will) be identified with $\fg$ using the form $B(-,-)$ fixed above. Then $X_{L_k}$ is a closed $G$-stable conic Poisson subvariety inside $X_{V^k} = \fg$, and $L_k$ is quasi-lisse if and only if $X_{L_k} \subseteq \cN$.
\end{clause}

%-----------
\subsection{Cyclotomic levels}\label{subsec:cl}

\begin{definition}[{\cite[Definition 2.1.2]{SYZ:cl}}]
	The \textbf{cyclotomic level map} is defined as
	\begin{equation*}
		\cl_n: \cN \aro \BZ_{\ge 1},\quad
		e \mapsto \min\{ m \mid (\ad e)^{2m} = 0 \}.
	\end{equation*}
	It clearly descend to a map on orbits
	\begin{equation*}
		\cl_n: \ubar \cN \aro \BZ_{\ge 1}.
	\end{equation*}
	Alternatively, for any $e \in \cN$, let $\fl$ be a minimal Levi containing $e$ (i.e. a Bala-Carter Levi of $e$), let $\{h,e,f\} \subset \fl$ be an $\fsl_2$-triple, and let $2a$ be the largest $\ad h$-weight on $\fl$. Then 
	\begin{equation*}
		\cl_n: e \mapsto a+1.
	\end{equation*}
	The equivalence of the two definitions is proven in \cite[Lemma 2.1.4]{SYZ:cl}.
\end{definition}

The following key property of $\cl_n$ is taken from \cite[Theorem 2.1.6]{SYZ:cl}; see also \cite{VIGRE}.

\begin{theorem}
	For any positive integer $m$, there exists a unique orbit $\BO(m) \in \ubar \cN$ so that
	\begin{equation*}
		\cl_n\inv([1,m]) = \overline{\BO(m)}.
	\end{equation*}
\end{theorem}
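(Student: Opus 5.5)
Here is the plan. The set $\cl_n\inv([1,m])$ is the set of $e \in \cN$ with $(\ad e)^{2m} = 0$, which is Zariski closed and $G$-stable. Hence it is a finite union of orbit closures, and the content of the statement is that it is irreducible. So I will show it has a unique maximal orbit with respect to the closure order.

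First, the sublevel set is a closed union of orbits. The condition $(\ad e)^{2m}=0$ is a polynomial condition in $e$ and is preserved by $\Ad G$. It is therefore a closed $G$-stable subset of $\cN$, so it equals $\bigcup_i \overline{\BO_i}$ for its maximal orbits $\BO_i$. Moreover $\cl_n$ is monotone: if $\BO' \subseteq \overline{\BO}$ then $\cl_n(\BO') \le \cl_n(\BO)$, because vanishing of $(\ad e)^{2m}$ persists under closure.

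Second, I show the maximal orbit is unique, using the alternative description $\cl_n(e) = a+1$, where $2a$ is the largest $\ad h$-weight on a Bala–Carter Levi $\fl$. If $e$ is distinguished in $\fl$, then $a$ is the height in $\fl$ of the highest root of the simple factor where $e$ is regular, measured with respect to the weighted Dynkin diagram. Using $\fsl_2$-theory, $(\ad e)^{2m}=0$ is equivalent to all $\ad h$-eigenvalues on $\fg$ being $\le 2m-2$, that is, all irreducible constituents of $\fg|_{\fsl_2}$ having dimension $\le 2m-1$. So the candidate maximal orbit should be the largest orbit for which $\fg$ decomposes into $\fsl_2$-strings of dimension at most $2m-1$.

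To construct $\BO(m)$ explicitly, I take it to be the saturation of the regular orbit of a suitable Levi. In the classical types the condition is a condition on Jordan blocks of the adjoint action, computed from the partition. For $\fsl_n$ with partition $\lambda$, the largest string has dimension $2\lambda_1-1$. The condition is then $\lambda_1 \le m$, and the dominance order has the unique maximum $(m,m,\dots,m,r)$. In types $B$, $C$, $D$ the analogous bound is on $\lambda_1+\lambda_2$ or on $\lambda_1$, adjusted for parity. Here I would check that the set of admissible partitions with that bound has a unique dominance-maximal element after the $B$/$C$/$D$-collapse. In the exceptional types, finitely many $m$ matter, since for $m \ge \check\Bh$ every orbit qualifies and $\BO(m)$ is regular. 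For those I verify the claim directly from the tables of the largest $\ad h$-weight $2a$ for each orbit together with the Hasse diagram of the closure order.

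The main obstacle is a uniform argument avoiding case checks. The idea I would try first is to show that the set of $e$ with $\cl_n(e) \le m$ equals $\overline{G \cdot (\fg_{\ge 2}\cap \ker \ldots)}$ for a fixed grading. Concretely, choose $x \in \fh$ determined by $m$, in the style of Lusztig–Spaltenstein and the $\BO(u)$ of admissible levels. Then I would prove that every $e$ with $(\ad e)^{2m}=0$ is conjugate into the image of $\fg_{x \ge 1/m}$ under a Richardson-type map, whose image is irreducible. If this uniform route fails, I fall back on the classification-by-type verification above.
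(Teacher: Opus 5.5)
Your opening reduction is fine: $\{e\in\cN : (\ad e)^{2m}=0\}$ is closed and $G$-stable, so the theorem says exactly that this set has a single maximal orbit. The paper does not reprove the statement. It quotes \cite[Theorem 2.1.6]{SYZ:cl} (see also \cite{VIGRE}), where $\BO(m)$ and the values of $\cl_n$ are described explicitly in each type, so a type-by-type check like your fallback is the right kind of argument. However, you carry it out only in type $A$, and the criterion you set up for the other types is off by one.

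A constituent $V_N$ of $\fg|_{\fsl_2}$ is killed by $(\ad e)^{2m}$ iff $N+1\le 2m$, i.e.\ $N\le 2m-1$, not $N\le 2m-2$. The two agree only when the top $\ad h$-weight is even, and this already fails in simply-laced type. Take $e\in\BO_{[3,2^2,1]}\subset\fso_8$ (label $3A_1$). Then $\Lambda^2\BC^8$ contains $V_2\otimes V_1\supset V_3$, so the top weight is $3$, $(\ad e)^4=0$ and $\cl_n(e)=2$, matching the Bala--Carter description. Under your criterion with $m=2$ this orbit is excluded, and what is left is $\overline{\BO_{[3,1^5]}}\cup\overline{\BO^{I}_{[2^4]}}\cup\overline{\BO^{II}_{[2^4]}}$, which has three maximal orbits. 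So the ``largest orbit whose strings have dimension $\le 2m-1$'' that you propose to construct does not exist in general. The correct partition bounds are:
\begin{itemize}
\item types $A$ and $C$: top weight $2\lambda_1-2$, giving $\lambda_1\le m$;
\item types $B$ and $D$: top weight $\max(2\lambda_1-4,\lambda_1+\lambda_2-2)$, giving $\lambda_1\le m+1$ and $\lambda_1+\lambda_2\le 2m+1$.
\end{itemize}
Even with these bounds, you still have to prove that the dominance-maximal orthogonal partition is unique and, in type $D$, not very even. The proposal asserts this but does not do it.

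The rest of the plan does not supply the missing argument either.
\begin{itemize}
\item Taking $\BO(m)$ to be the saturation of a regular orbit of a Levi fails in general. In $\fso_8$ one has $\cl_n^{-1}([1,4])=\cl_n^{-1}([1,5])=\overline{\BO_{[5,3]}}$, where $[5,3]=D_4(a_1)$ is distinguished and not regular. In $\fe_6$ one gets $\BO(9)=E_6(a_1)$.
\item Your ``uniform route'' only restates the goal. The claim that every $e$ with $(\ad e)^{2m}=0$ is conjugate into one irreducible family, such as $G\cdot\fu$ for a fixed parabolic, is precisely the irreducibility to be proved, and nothing in the sketch addresses it.
\item The exceptional types are deferred to tables that are never consulted. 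Also, outside the simply-laced case, the threshold beyond which $\BO(m)$ is regular is the Coxeter number $\Bh$, not $\check\Bh$; for $G_2$ it is $m\ge 6$.
\end{itemize}
As it stands you have the easy reduction plus type $A$. The substance of the theorem in the remaining types is missing, and the criterion you set up for them would give wrong answers.
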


The orbit $\BO(m)$ and the value of $\cl_n$ on each orbit is described explicitly in \cite[\S 2.1]{SYZ:cl}.

Now let $\check \BO(m) \in \ubar{\check \cN}$ be the orbit corresponding to $m$ in the Langlands dual Lie algebra $\check \fg$. These orbits are especially important in the Langlands correspondence involving $L_k(\fg)$ for integral $k$ above critical level, which is one of the conceptual reasons for their appearance in Conjecture \ref{conj:int}. To be more precise, consider the affine weight $m \Lambda_0 + \rho$. When $m = k + \check \Bh$, the weight $m \Lambda_0 + \rho$ and the highest weight $k \Lambda_0$ of $L_k(\fg)$ differ by an affine rho-shift. Let
\begin{itemize}
	\item $\xi_m$ be the unique dominant $W_{aff}$-translate of $m \Lambda_0 + \rho$ under the usual (non-dot) action, let
	\item $w_m \in W_{aff}$ be the longest element stabilizing the weight $\xi_m$, and let
	\item $\ubar \bc(w_m) \subset W_{aff}$ be the two-sided cell in $W_{aff}$ containing $w_m$.
\end{itemize}
Now recall Lusztig's bijection
\begin{equation*}
	\ubar{\check \cN} \bijects \{\text{two-sided cells in } W_{aff}\}
\end{equation*}
established in \cite[Theorem 4.8]{Lusztig:aff-cells-4}. The following result is proven in \cite[Theorem 4.2.1]{SYZ:cl}.

\begin{theorem}
	Let $\fg$ be of simply-laced type. For each integer $m \ge 1$, Lusztig's bijection sends $\check \BO(m)$ to $\ubar \bc(w_m)$.
\end{theorem}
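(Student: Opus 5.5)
The plan is to identify $\ubar \bc(w_m)$ through the $a$-function and Lusztig's description of the cell attached to a parahoric. Since $\xi_m$ is dominant, its stabilizer in $W_{aff}$ is a standard parabolic subgroup $W_J$ generated by the affine simple reflections $s_j$ with $\langle \xi_m, \alpha_j^\vee\rangle = 0$. The first task is to compute $J$ explicitly. The $W_{aff}$-orbit of $m\Lambda_0+\rho$ is the orbit of $\rho$ under $W \ltimes mQ^\vee$. Hence, when $\fg$ is simply-laced, $J$ corresponds to the set of roots $\alpha$ for which $\langle \rho, \alpha^\vee\rangle \equiv 0 \pmod m$, suitably normalized into the fundamental alcove scaled by $m$. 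Concretely, $J$ is the Dynkin subdiagram of the affine diagram obtained by deleting those nodes $j$ at which the Kac coordinates of the dominant representative are nonzero. Then $w_m = w_J$ is the longest element of the finite parabolic $W_J$.

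Next I would invoke Lusztig's result that the longest element $w_J$ of a finite standard parabolic subgroup lies in the two-sided cell corresponding, under the bijection of \cite[Theorem 4.8]{Lusztig:aff-cells-4}, to a specific unipotent class in $\check G$. This class can be described as follows. Let $\check{\mathfrak l}_J$ be the pseudo-Levi (the centralizer of a finite order element, or equivalently an equal-rank subalgebra) of $\check \fg$ with root system $J$. The cell of $w_J$ then corresponds to the orbit whose Springer representation is $j_{W_J}^{W}(\mathrm{sgn})$, namely the orbit induced or saturated from the regular orbit of the pseudo-Levi $\check L_J$, so that $a(w_J)=\ell(w_J)=\dim \mathcal B_u$. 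Because we are in the simply-laced case, $\check\fg\cong\fg$ and the cell is that of $\Sat_{\check L_J}^{\check G}(\check\BO_{reg,J})$. One must be careful that $j$-induction from a pseudo-Levi matches a saturation; for simply-laced groups this is known through the Lusztig–Spaltenstein and Sommers description of Lusztig's map.

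It then remains to show the combinatorial identity
\begin{equation*}
\Sat_{\check L_J}^{\check G}(\check\BO_{reg,J}) = \check\BO(m).
\end{equation*}
By the defining property of $\BO(m)$ in the preceding theorem, this amounts to two claims. First, the regular nilpotent $e_J$ of $\check{\mathfrak l}_J$ satisfies $(\ad e_J)^{2m}=0$ on $\check\fg$. Second, every orbit $\BO$ with $\cl_n(\BO)\le m$ lies in the closure of this saturation. For the first claim, use the grading by $\rho$: the element $\exp(2\pi i\rho^\vee/m)$ has centralizer $\check{\mathfrak l}_J$, and $e_J$ has $h$-eigenvalues on $\check \fg$ that are bounded by $2(m-1)$, because the root heights are taken modulo $m$. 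For the second claim, I would use the Levi description of $\cl_n$ from the definition: if $\cl_n(e)\le m$ with Bala–Carter Levi $\fl$, then $\rho_\fl^\vee$ has heights less than $m$ on $\fl$. From this one conjugates $\fl$ into a pseudo-Levi contained in $\check{\mathfrak l}_J$, so that $e$ is dominated by $e_J$.

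The main obstacle is this second claim, which is the maximality statement. I would first try to argue uniformly via Sommers' pairs $(\mathfrak l, \text{regular})$ and the closure ordering on induced orbits. If that does not work, I would fall back on a type-by-type check: using the explicit tables of $\BO(m)$ from \cite[\S 2.1]{SYZ:cl} and the Kac coordinates of $\rho/m$ in types $A$, $D$, and $E_{6,7,8}$, finitely many values of $m$ below the Coxeter number need to be verified in the exceptional types.
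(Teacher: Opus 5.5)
\textbf{There is a genuine gap: the orbit you attach to the cell of $w_J$ is the wrong one, so the identity you set out to prove is false.}

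Your first step is fine, and so is the Lusztig input you quote at the start of the second. The two-sided cell containing $w_J$ corresponds under Lusztig's bijection to the unipotent class whose Springer representation is $j_{W_J}^W\sgn$, where $W_J$ is viewed inside $W$ via $\pi\colon W_{aff}\to W$. As in the proof of Proposition~\ref{prop:du-reg}, $\pi(W_{\xi_m})$ is conjugate to the integral Weyl group $W_{[\rho/m]}$.

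The gap is the next sentence, where you identify this class with $\Sat_{\check L_J}^{\check G}(\check\BO_{reg,J})$. That is the wrong end of the duality. In the normalization where $\sgn$ corresponds to the zero orbit, $j_{W_J}^W\sgn$ belongs to an orbit induced from the \emph{zero} orbit of $\check L_J$. For $J$ of Levi type this is the Richardson orbit $\Ind_{\check L_J}^{\check G}(0)$, which is the Barbasch--Vogan dual of a saturated regular orbit, not such a saturation.

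Your own check $a(w_J)=\ell(w_J)=\dim\mathcal{B}_u$ exposes this at both extremes:
\begin{itemize}
\item For $J=\emptyset$, the cell $\{1\}$ has $a=0$, so it corresponds to the regular class. But $\Sat_{\check T}^{\check G}(0)=0$ has $\dim\mathcal{B}_u=|\Phi^+|$.
\item For $J$ of full finite type, $w_J$ lies in the lowest cell, which corresponds to the trivial class. But your saturation is $\check\BO_{reg}$.
\end{itemize}

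Accordingly, the identity $\Sat_{\check L_J}^{\check G}(\check\BO_{reg,J})=\check\BO(m)$ is false, with $\check\fl_J$ the centralizer of $\exp(2\pi i\rho^\vee/m)$ as you define it:
\begin{itemize}
\item For $m\ge\check\Bh$, no root height is divisible by $m$. So $\check\fl_J=\check\fh$ and the left side is $0$, while $\check\BO(m)=\check\BO_{reg}$.
\item For $m=1$, $\exp(2\pi i\rho^\vee)$ is central. So $\check\fl_J=\check\fg$ and the left side is $\check\BO_{reg}$, while $\check\BO(1)=0$.
\end{itemize}
In particular your maximality claim fails outright, since every orbit has $\cl_n\le\check\Bh$. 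No type-by-type verification can repair it.

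What actually remains after the Lusztig input is to show that $\check\BO(m)$ has Springer representation $j_{W_{[\rho/m]}}^W\sgn$. Equivalently, by \cite{Joseph:prim}, $\overline{\check\BO(m)}$ is the associated variety of the maximal primitive ideal with infinitesimal character $\rho/m$. This is the fact recorded in \cite[Lemma 4.2.3]{SYZ:cl} (cf.\ \cite[Proposition 4.8]{Yun:Epi}) and used in the proof of Proposition~\ref{prop:du-reg}. Combined with Lusztig's description of the cell of $w_J$, it gives the theorem, which the paper quotes from \cite[Theorem 4.2.1]{SYZ:cl}.

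To see that this is the right target, take $\fsl_n$ with $n=am+b$, $0\le b<m$. Then $W_{[\rho/m]}$ is the product of symmetric groups on the residue classes of $\{1,\dots,n\}$ modulo $m$. Inducing $\sgn$ gives the transpose of $[(a+1)^b,a^{m-b}]$, namely $[m^a,b]$. This is the largest partition with all parts at most $m$, i.e.\ the largest orbit with $\cl_n\le m$. Your attempt to dominate an arbitrary $e$ with $\cl_n(e)\le m$ by the regular element of the pseudo-Levi should be replaced by a comparison of this kind.
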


%--------
\subsection{Metaplectic dual and covering duality}\label{subsec:mdual}

Let $G$ be the simply connected group with Lie algebra $\fg$. Fix a maximal torus $T$ in $G$ with Lie algebra $\fh$.

To define metaplectic dual groups, we need fix a positive integer $u$, which is supposed to be the degree of a metaplectic cover, and a central extension of sheaves of groups
\begin{equation*}
	1 \aro K_2 \aro \widetilde G \aro G \aro 1
\end{equation*}
where $K_2$ denotes the sheaf taking the second $K$-group. By \cite{Brylinski-Deligne}, such extensions are classified by Weyl group invariant quadratic forms $Q: X_*(T) \to \BZ$, or equivalently Weyl group invariant bilinear forms $B:X_*(T) \times X_*(T) \to \BZ$ ($Q$ and $B$ are related by $Q(x) = \frac12 B(x,x)$). Such forms are unique up to an integer constant, and they also classify, by \cite{Finkelberg-Lysenko}, central extensions
\begin{equation*}
	1 \aro \BG_m \aro \widetilde{LG} \aro LG \aro 1
\end{equation*}
of the loop group $LG$ by $\BG_m$.

From now on, we fix $Q$ (or $B$) to be the smallest one, i.e. one such that $Q(\check \alpha) = 1$ (or $B(\check \alpha, \check \alpha) = 2$) for any short coroot $\check \alpha$. 

\begin{definition}
	The \textbf{metaplectic dual group} attached to $u$ (and the fixed $Q$) is a reductive group $\check G^{(u)}$ with maximal torus $\check T^{(u)}$ whose root datum $(X^*(\check T^{(u)}), \Phi(\check G^{(u)}), X_*(\check T^{(u)}), \check \Phi(\check G^{(u)}))$ is given by:
	\begin{align*}
		X^*(\check T^{(u)}) &= \big\{ \lambda \in X_*(T) \mid \forany \mu \in X_*(T), B(\lambda,\mu) \in u \BZ \big\}\\
		\Phi(\check G^{(u)}) &= \big\{ \check \alpha_{Q,u} := u_\alpha \check \alpha \mid \check \alpha \in \check \Phi(G) \big\}\\
		X_*(\check T^{(u)}) &= \text{sublattice in } X^*(T) \otimes \BQ \text{ that is dual to } X^*(\check T^{(u)})\\
		\check \Phi(\check G^{(u)}) &= \big\{ \alpha_{Q,u} := u_\alpha\inv \alpha \mid  \alpha \in \Phi(G) \big\}
	\end{align*}
	where $(X^*(T), \Phi(G), X_*(T), \check \Phi(G))$ is the root datum of $G$ and $u_\alpha = u/\gcd(u, Q(\check \alpha))$.
\end{definition}

Let $\check G$ be the (usual) Langlands dual group of $G$. Since $X^*(T)$ pairs integrally with elements in $X_*(T)$, in particular with those in $X^*(\check T^{(u)})$, $X^*(T)$ is a sublattice of $X_*(\check T^{(u)})$ of full rank. Using this we may identify
\begin{equation*}
	\check \ft^{(u)} = X_*(\check T^{(u)}) \dotimes_\BZ \BC = X^*(T) \dotimes_\BZ \BC = \check \ft^* = \ft.
\end{equation*}
Then the fundamental coweight $\omega(\check \alpha_{Q,u})$ associated to $\check \alpha_{Q,u}$ is identified with $u_\alpha\inv \omega(\check \alpha)$.

Write $\ubar{\check \cN}^{(u)}$ for the set of nilpotent orbits in $\check \fg^{(u)}$.

\begin{definition}[{\cite[\S 2.1]{Gao-Liu-Lo-Shahidi}}]
	The \textbf{covering duality map}
	\begin{equation*}
		\bd^{(u)}: \ubar{\check \cN}^{(u)} \aro \ubar \cN
	\end{equation*}
	is defined as follows. For an orbit $\check \BO^{(u)} \in \ubar{\check \cN}^{(u)}$, let $\{e,h,f\} \subset \check \fg^{(u)}$ be an $\fsl_2$-triple with $h \in \check \ft^{(u)}$ dominant. Suppose the identification $\check \ft^{(u)} = \ft$ sends $h \mapsto h^{(u)}$. Then we set $\bd^{(u)} \check \BO^{(u)} = \BO$ to be the unique open orbit $\BO$ in the associated variety of the maximal primitive ideal of $\cU(\fg)$ with infinitesimal character $h^{(u)}/2$. 
\end{definition}

This map generalizes the metaplectic duality map defined in \cite{BMSZ:metap-dual}. The value of $\bd^{(u)}$ in each type is computed explicitly in \textit{op. cit.} When $u=1$, we recover the duality of Barbasch-Vogan-Lusztig-Spaltenstein \cite{Lusztig:sp-1,Barbasch-Vogan:unipotent}.

\begin{remark}[Distinction in type $D$]\label{rmk:Spin-vs-SO}
	Since we assume $G$ is simply-connected, in type $D$ our $G$ is the Spin group. On the contrary, \cite[\S 3]{Gao-Liu-Lo-Shahidi} takes $G$ to be the special orthogonal group. As a result, their combinatorial formula for $\bd^{(u)}$ is based on $Q(\check \alpha) =2$ rather than $Q(\check \alpha)=1$. In the notation there, when $u$ is even, their formula for $\bd^{(u)}(-)$ is $\big( \bd^{(u/2)}_{com,A}(-)\big)_D$, while we use $\big( \bd^{(u)}_{com,A}(-)\big)_D$.
\end{remark}

\begin{clause}[Simply-laced types]\label{cls:du-ADE}
	Suppose $\fg$ is simply-laced. Then under our choice of $Q$, we have $u_\alpha = u$ for all root $\alpha$. From the description of $\Phi(\check G^{(u)})$, it is clear that $\check \fg^{(u)}$ and $\check \fg$ have the same type. As a result, we have naturally a bijection $\ubar{\check \cN}^{(u)} \bij \ubar{\check \cN}$, $\check \BO^{(u)} \mapsto \check \BO$ under which the weighted Dynkin diagrams of $\check \BO^{(u)}$ and $\check \BO$ are identified and the neutral elements in their respective $\fsl_2$-triples satisfy
	\begin{equation*}
		h^{(u)} = h/u.
	\end{equation*}
	Consequently, 
	\begin{equation*}
		\overline{\bd^{(u)} \check \BO^{(u)} } = \AV( \cU(\fg)/J_{h/2u})
	\end{equation*}
	where $\AV$ denotes the associated variety, and $J_{h/2u}$ is the maximal primitive ideal with infinitesimal character $h/2u$. 
\end{clause}

%---------
\subsection{Sheets in $\fg$}

\begin{definition}
	For a Levi subalgebra $\fl \subset \fg$ and a nilpotent orbit $\BO_L \subset \fl$, the \textit{Jordan class} (or the \textit{decomposition class}) attached to the pair $(\fl, \BO_L)$ is the subset in $\fg$ given by
	\begin{equation*}
		J_G(\fl, \BO_L) = \Ad G \cdot (\fz(\fl)^{reg} + \BO_L)
	\end{equation*}
	where $X^{reg}$ takes elements in $X$ whose centralizer has minimum possible dimension among elements in $X$. This is locally closed in $\fg$ \cite{Borho-Kraft:deform}. Equivalently, $J_G(\fl, \BO_L)$ consists of all elements $x \in \fg$ such that, if $x = h+e$ is its Jordan decomposition, then the pair $(\fz(\fz_\fg(h))^{reg}, \Ad Z_G(h) \cdot e)$ is $\Ad G$-conjugate to $(\fl, \BO_L)$. In this case we also write $J_G(\fl, \BO_L) = J_G(x)$.
	
	The \textbf{(generalized) sheet} attached to $(\fl, \BO_L)$ is 
	\begin{equation*}
		\cS(\fl, \BO_L) = \big( \overline{J_G(\fl, \BO_L)} \big)^{reg}.
	\end{equation*}
	When $\BO_L$ is a rigid orbit in $\fl$ (i.e. when $\BO_L$ cannot be obtained from Lusztig-Spaltenstein induction from an orbit in a proper Levi in $\fl$, see \cite{Lusztig-Spaltenstein:induced} or \cite[\S 2]{Borho:sheets}), $\cS(\fl, \BO_L)$ is a \textit{sheet} in the sense of Borho-Kraft \cite{Borho-Kraft:deform}. The closure of a (generalized) sheet is
	\begin{equation}\label{eqn:sheet-closure}
		\overline{\cS(\fl, \BO_L)} = \overline{J_G(\fl, \BO_L)} = \Ad G \cdot \big( \fz(\fl) + \overline{\BO_L} + \fu\big)
	\end{equation}
	where $\fp = \fl \oplus \fu$ is any parabolic in $\fg$ containing $\fl$ as its Levi factor and $\fu$ is the radical of $\fp$, see \cite[\S 2.5 Lemma]{Borho:sheets}. We have
	\begin{equation}\label{eqn:dim-of-sheet}
		\dim \overline{\cS(\fl, \BO_L)} = \dim \fz(\fl) + \dim \Ind_L^G \BO_L,
	\end{equation}
	see \cite[Proposition 39.2.9]{Tauvel-Yu}.
\end{definition}

\begin{lemma}\label{lem:sheets-cap-N-ss}
	We have
	\begin{enumerate}
		\item $\overline{\cS(\fl, \BO_L)} \cap \cN = \overline{\Ind_L^G \BO_L}$, where $\Ind_L^G \BO_L$ denotes the Lusztig-Spaltenstein induction of nilpotent orbits.
		
		\item $\overline{\cS(\fl, \BO_L)} \cap \fg_{ss} = \Ad G \cdot \fz(\fl)$, where $\fg_{ss}$ denotes the set of semisimple elements in $\fg$.
	\end{enumerate}
\end{lemma}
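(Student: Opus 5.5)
Both parts follow from the description of the closure in \eqref{eqn:sheet-closure}, $\overline{\cS(\fl, \BO_L)} = \Ad G \cdot (\fz(\fl) + \overline{\BO_L} + \fu)$, together with the elementary behavior of Jordan decompositions inside the parabolic $\fp = \fl \oplus \fu$.

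For (1), we intersect this description with $\cN$. Take $x = z + n + v$ with $z \in \fz(\fl)$, $n \in \overline{\BO_L}$ and $v \in \fu$. The projection $\fp \to \fl$ is a Lie algebra map, so $x$ is nilpotent if and only if its image $z+n$ in $\fl$ is nilpotent. Since $z$ is central in $\fl$ and commutes with the nilpotent $n$, the element $z+n$ is nilpotent exactly when $z = 0$. Hence
\begin{equation*}
	\overline{\cS(\fl,\BO_L)} \cap \cN = \Ad G\cdot(\overline{\BO_L} + \fu).
\end{equation*}
By the Lusztig--Spaltenstein definition, $\Ind_L^G \BO_L$ is the dense orbit in $\Ad G \cdot (\BO_L + \fu)$. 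The set $\Ad G \cdot (\overline{\BO_L} + \fu)$ is closed, because $G \times^P (\overline{\BO_L} + \fu) \to \fg$ is proper with $G/P$ projective. It is irreducible, and it contains $\Ad G\cdot(\BO_L+\fu)$ as a dense subset, so it equals $\overline{\Ind_L^G \BO_L}$.

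For (2), the inclusion $\supseteq$ is immediate by taking $n=0$ and $v=0$. For $\subseteq$, let $x = z+n+v$ be semisimple. The Jordan decomposition in $\fp$ is compatible with the projection to $\fl$, so the image $z+n$ is semisimple, which forces $n = 0$. Thus $x = z + v$ is a semisimple element of $\fp$ whose image in $\fl$ is $z$. A semisimple element of $\fp$ is $U$-conjugate (with $U = \exp \fu$) into the Levi $\fl$; concretely, it lies in some Levi of $\fp$, and all Levis of $\fp$ are $U$-conjugate. Conjugation by $U$ does not change the image in $\fl$, so $x$ is $U$-conjugate to $z$. Therefore $x \in \Ad G\cdot \fz(\fl)$.

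The only step that needs any care is the conjugacy claim in (2), namely that a semisimple element of $\fp$ is $\Ad U$-conjugate to its $\fl$-component. I would cite the standard fact that every torus of $P$ lies in a Levi subgroup and that Levi subgroups of $P$ are $U$-conjugate. An alternative route is to note that $\Ad U\cdot z$ is closed and dense in $z+\fu$ when $z$ is regular in $\fz(\fl)$, and then pass to limits for nonregular $z$. However, the conjugacy argument handles all $z$ uniformly, so I would use it.
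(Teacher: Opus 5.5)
Your proof is correct, and it has the same skeleton as the paper's: both parts are read off from \eqref{eqn:sheet-closure}. The difference is that the paper delegates the two substantive inputs to Borho: \S3.2 for part (1), and \S2.1 for the fact that every element of $\fz(\fl)+\overline{\BO_L}+\fu$ has its semisimple part in $\Ad G\cdot\fz(\fl)$. You instead prove these inputs from standard structure theory of $P=LU$, namely
\begin{itemize}
\item compatibility of Jordan decomposition with $P\to L$,
\item properness of $G\times^P(\overline{\BO_L}+\fu)\to\fg$, and
\item $U$-conjugacy of Levi subgroups of $P$.
\end{itemize}
Your argument for $\subseteq$ in (2), applied to $x_s$ rather than $x$, actually recovers Borho's statement itself: $x_s\in\fp$, and its image in $\fl$ is $(z+n)_s=z$. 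So nothing is lost; the paper's version is shorter, and yours is self-contained.

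One point of precision in (1): the fact that $\fp\to\fl$ is a Lie algebra map does not by itself give the equivalence, since $\ad_\fl$ kills $\fz(\fl)$. Nilpotence has to be taken in $\fg$. The justification is the same Jordan-decomposition compatibility you invoke in (2), together with the fact that $\fu$ consists of nilpotent elements. Equivalently, you can use the grading of $\fg$ by an element of $\fz(\fl)$ for which $\fp=\fg_{\ge 0}$.
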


\begin{proof}
	Part (1) follows from \cite[\S 3.2]{Borho:sheets}. For part (2), first it is clear from (\ref{eqn:sheet-closure}) that $\Ad G \cdot \fz(\fl)$ is contained in $\overline{\cS(\fl, \BO_L)}$. Conversely, by \cite[\S 2.1]{Borho:sheets} any element in (\ref{eqn:sheet-closure}) has its semisimple part contained in $\Ad G \cdot \fz(\fl)$.
\end{proof}

%\begin{clause}[Sheets and Slodowy slices]
%	We now collect a few facts between (generalized) sheets and Slodowy slices. They will be used in \S\ref{subsec:evid} when presenting evidences related to $\cW$-algebras.
%	
%	????
%\end{clause}

%=============
\section{The main conjecture and evidence}

%--------
\subsection{Statement of the conjecture}

Throughout this section, let $\fg$ be simple of simply-laced type, and let $k \in \BQ$ be such that
\begin{equation*}
	k + \check \Bh = \frac mu \text{ with } m \ge 1 \text{ and } \operatorname{gcd}(m,u) = 1.
\end{equation*}
Recall from \ref{cls:du-ADE} that if $\fg$ is simply-laced, $\check \fg^{(u)}$ and $\check \fg$ have the same type. As a result, we have the orbit $\check \BO(m)^{(u)} \in \ubar{\check \cN}^{(u)}$. Write $\check \BO(m)^{(u)} = \Sat_{\check L^{(u)}}^{\check G^{(u)}} \BO_{\check L^{(u)}}$ where $\check L^{(u)}$ is the Bala-Carter Levi of $\check \BO(m)^{(u)}$. From the description of the roots $\Phi(\check G^{(u)})$, there is a natural bijection between simple roots of $\check G^{(u)}$ and those of $G$. In particular, the Levi $\check L^{(u)}$ of $\check G^{(u)}$ corresponds to a Levi $L$ of $G$. Let $\bd^{(u)}_L$ be the covering duality map for $\check L^{(u)}$ and $L$. Let $\fz(\fl)$ denote the center of $\fl$ and let $P = LU$ be any parabolic subgroup containing $L$ as its Levi.

Our main conjecture is the following.

\begin{conjecture}\label{conj:main}
	In the above situation, we have
	\begin{equation*}
		X_{L_k(\fg)} = \overline{\cS(\fl, \bd^{(u)}_L \BO_{\check L^{(u)}})}.
	\end{equation*}
\end{conjecture}

If $u =1$, we recover Conjecture \ref{conj:int} for integer levels. 

\begin{namedtheorem}[Corollary of Conjecture,]~
	\begin{enumerate}
		\item We have $X_{L_k(\fg)} \cap \cN = \overline{\bd^{(u)} \check \BO(m)^{(u)}}$.
		
		\item $X_{L_k(\fg)} \subseteq \cN$ if and only if $\check \BO(m)^{(u)}$ is distinguished. In particular, the quasi-lisse-ness of $L_k(\fg)$ depends only on $m$, not on $u$.
	\end{enumerate}
\end{namedtheorem}

\begin{proof}
	By Lemma \ref{lem:sheets-cap-N-ss}(1), $X_{L_k} \cap \cN = \overline{\Ind_L^G \bd^{(u)}_L \BO_{\check L^{(u)}} }$. This is equal to $\overline{\bd^{(u)} \check \BO(m)^{(u)}}$ since $\bd^{(u)} \Sat_{\check L^{(u)}}^{\check G^{(u)}} = \Ind_L^G \bd_L^{(u)}$ by \cite[Theorem 1.1(ii)]{Gao-Liu-Lo-Shahidi}. This proves (1). For part (2), $X_{L_k} \subseteq \cN$ implies $X_{L_k} \cap \fg_{ss} = \{0\}$. Since $X_{L_k} \cap \fg_{ss} = \Ad G \cdot \fz(\fl)$ by Lemma \ref{lem:sheets-cap-N-ss}(2), we must have $\fl = \fg$. Conversely, if $\fl = \fg$, then from (\ref{eqn:sheet-closure}) we must have $X_{L_k} \subseteq \cN$. Finally, note that $\fl = \fg$ if and only if $\check \fl^{(u)} = \check \fg^{(u)}$, which happens if and only if $\check \BO(m)^{(u)}$ is distinguished in $\check \fg^{(u)}$. 
\end{proof} 

\begin{remark}
	If we identify $\check \fg^{(u)}$ with $\check \fg$ as in \ref{cls:du-ADE} so that $\bd^{(u)}$ is a map between nilpotent orbits of $\check \fg$ and $\fg$, then by Proposition \ref{prop:du-reg} below, we have
	\begin{equation*}
		X_{L_k(\fg)} \cap \cN= \overline{\bd^{(u)} \bd^{(m)} \BO_{reg}}
	\end{equation*}
	where $\BO_{reg}$ is the regular nilpotent orbit in $\fg$.
\end{remark}

%------
\subsection{Explicit known results and conjectures}\label{subsec:explicit-results}

When $u = 1$ (i.e. $k$ is an integer), a list of known cases of our conjecture is collected in \cite[Table 2]{SYZ:cl}. In what follows we mainly focus on cases where $k$ is not an integer. By abuse of notation we will write $\check \BO(m)^{(u)}$ as $\check \BO(m)$.

\begin{clause}[Admissible levels]\label{cls:adm}
	Let $\fg$ be of simply-laced type. The level $k$ is said to be \textit{admissible} if $m \ge \check \Bh$. Since the largest possible value of the cyclotomic level map $\cl$ is $\check \Bh$, the condition $m \ge \check \Bh$ implies $\check \BO(m) = \check \BO(\check \Bh) = \check \BO_{reg}$, the regular orbit in $\check \fg$.
	
	\begin{proposition}\label{prop:du-reg}
		We have $\bd^{(u)} \check \BO_{reg} = \BO(u)$.
	\end{proposition}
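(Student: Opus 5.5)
By \ref{cls:du-ADE}, the statement is equivalent to
\begin{equation*}
	\AV\big(\cU(\fg)/J_{\lambda}\big) = \overline{\BO(u)}, \qquad \lambda := h/2u,
\end{equation*}
where $h$ is the neutral element of a principal $\fsl_2$-triple in $\check \fg$, dominant in $\check \ft = \ft^*$. Since $\fg$ is simply-laced, $h/2 = \check \rho$ is identified with $\rho$, so $\lambda = \rho/u$. The plan is to compute the left side with the Barbasch--Vogan description of associated varieties of maximal primitive ideals. I then identify the resulting orbit with $\BO(u)$ using the characterization $\overline{\BO(u)} = \cl_n\inv([1,u])$ from the theorem recalled in \S\ref{subsec:cl}.

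\emph{Step 1: integral root system of $\lambda$.} A root $\alpha$ satisfies $\langle \rho/u, \check\alpha\rangle \in \BZ$ if and only if $\operatorname{ht}(\alpha) \in u\BZ$. Hence the integral root subsystem is
\begin{equation*}
	\Phi_{[\lambda]} = \{\alpha \in \Phi \mid u \text{ divides } \operatorname{ht}(\alpha)\},
\end{equation*}
with integral Weyl group $W_{[\lambda]}$. No root pairs to $0$ with $\lambda$, so $\lambda$ is regular. For regular $\lambda$, the maximal primitive ideal is the annihilator of the simple module of $\lambda$-antidominant highest weight. Its associated variety is the closure of the orbit $\BO_\lambda$ whose Springer representation is the truncated induction $j_{W_{[\lambda]}}^{W}(\operatorname{sgn})$; this is Barbasch--Vogan, with Losev's extension to non-integral $\lambda$. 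In particular,
\begin{equation*}
	\dim \BO_\lambda = |\Phi| - |\Phi_{[\lambda]}|.
\end{equation*}

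\emph{Step 2: $\BO_\lambda \subseteq \overline{\BO(u)}$.} By the theorem on $\cl_n$, it suffices to show $(\ad e)^{2u} = 0$ for $e \in \BO_\lambda$. I would use the alternative description of $\cl_n$ via Bala--Carter Levis, together with Sommers' description of $j_{W'}^W(\operatorname{sgn})$. The latter writes $\BO_\lambda$ as the Lusztig--Spaltenstein induction/saturation from the regular orbit of a pseudo-Levi attached to $\Phi_{[\lambda]}$. One then bounds the largest $\ad h$-eigenvalue on a Bala--Carter Levi of that orbit by $2(u-1)$. Heuristically, $\lambda$ is an "$e^{2\pi i/u}$-torsion" point, and the grading by $\operatorname{ht} \bmod u$ gives $\Phi_{[\lambda]}$; this is the mechanism in \cite{VIGRE}.

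\emph{Step 3: equality by dimension.} $\BO(u)$ is the unique maximal orbit in $\cl_n\inv([1,u])$, so it suffices to show $\dim \BO(u) = |\Phi| - |\Phi_{[\lambda]}|$. I would extract this from the explicit description of $\BO(u)$ in \cite[\S 2.1]{SYZ:cl}. In type $A_{n-1}$, $\BO(u)$ has partition $(u,\dots,u,r)$, and $|\Phi_{[\lambda]}|$ is counted from the residues of $1,\dots,n$ mod $u$; the two numbers match by a direct count. Types $D$ and $E$ would be checked case by case, for $E$ by a table over $u < \check \Bh$. For $u \ge \check \Bh$, $\Phi_{[\lambda]} = \emptyset$ and both sides are $\cN$.

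The main obstacle is making Step 2 uniform rather than case-by-case. As a fallback, I would instead use the explicit combinatorial formula for $\bd^{(u)}$ of Gao--Liu--Lo--Shahidi applied to the regular orbit, and compare directly with the tables for $\BO(u)$, minding the type $D$ normalization in Remark \ref{rmk:Spin-vs-SO}.
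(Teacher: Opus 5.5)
There is a genuine gap in Step 2, and Step 3 cannot make up for it. Distinct nilpotent orbits can have the same dimension (already $[3,1^3]$ and $[2^3]$ in $\mathfrak{sl}_6$). So matching $\dim \BO_\lambda = |\Phi| - |\Phi_{[\lambda]}|$ with $\dim \BO(u)$ identifies the two orbits only once the containment $\BO_\lambda \subseteq \overline{\BO(u)}$ is known.

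The description you propose for that containment is wrong: $j_{W_{[\lambda]}}^W(\operatorname{sgn})$ is not the saturation of the regular orbit of the pseudo-Levi with root system $\Phi_{[\lambda]}$. Check the extreme cases. For $u=1$ you have $\Phi_{[\lambda]}=\Phi$, and $j(\operatorname{sgn})=\operatorname{sgn}$ is the zero orbit, whereas that saturation is $\BO_{reg}$. For $u\ge\check\Bh$ it is the other way round. When $\Phi_{[\lambda]}$ is conjugate to a Levi subsystem (as in type $A$), the correct statement is that $\BO_\lambda$ is the Richardson orbit induced from the \emph{zero} orbit. In general, however, $\Phi_{[\lambda]}$ is not a Levi subsystem: for $E_8$ and $u=2$ the roots of even height form a $D_8$. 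Then no Lusztig--Spaltenstein induction is available, and nothing in the proposal produces a Bala--Carter Levi of $\BO_\lambda$ on which to bound the $\operatorname{ad} h$-weights by $2(u-1)$. That bound is essentially the whole content of the proposition, so the argument as written does not prove it. Your fallback through the explicit Gao--Liu--Lo--Shahidi formulas would work, but only as a type-by-type table check that you have not carried out.

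The idea you are missing is to characterize $\BO(u)$ by its Springer representation rather than by $\cl_n$. By \cite[Lemma 4.2.3]{SYZ:cl} (or \cite[Proposition 4.8]{Yun:Epi}), $\BO(u)$ corresponds to $j_{W_{\xi_u}}^W(\operatorname{sgn})$. Here $W_{\xi_u}$ is the stabilizer in $W_{aff}$ of the dominant translate $\xi_u$ of $u\Lambda_0+\rho$, viewed inside $W$ via the projection $\pi: W_{aff}\to W$. Your Step 1 computation then finishes the proof uniformly:
\begin{itemize}
\item An affine root $\alpha+n\delta$ is orthogonal to $u\Lambda_0+\rho$ if and only if $nu+\operatorname{ht}(\alpha)=0$. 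So $\alpha\mapsto\alpha-\frac{\operatorname{ht}(\alpha)}{u}\delta$ is a bijection from $\Phi_{[\rho/u]}$ onto the affine roots orthogonal to $u\Lambda_0+\rho$.
\item Since $s_{\alpha-\frac{\operatorname{ht}(\alpha)}{u}\delta}=s_\alpha t_{-\frac{\operatorname{ht}(\alpha)}{u}\check\alpha}$, the map $\pi$ sends the stabilizer of $u\Lambda_0+\rho$ onto $W_{[\rho/u]}$.
\item Passing from $u\Lambda_0+\rho$ to $\xi_u$ only conjugates this image inside $W$.
\end{itemize}
Hence both orbits have the same Springer representation, with no dimension count or case analysis needed.

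A minor point: the maximal primitive ideal is the annihilator of the simple module whose $\rho$-shifted highest weight is $\Phi_{[\lambda]}$-\emph{dominant}. The antidominant one is a Verma module, whose annihilator is the minimal primitive ideal, with associated variety $\cN$.
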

	
	This equality can be obtained from \cite[Theorem 1.1]{Arakawa-van-Ekeren-Moreau:Oq}. It was also observed by Fan Gao, see \cite[Remark 5.1]{Gao-Liu-Lo-Shahidi}. We record an alternative proof here.
	
	\begin{proof}
		Let $\xi_u \in \fh_{aff}^*$ be the unique dominant $W_{aff}$-translate of $u \Lambda_0 + \rho$ defined in \S \ref{subsec:cl} (for $u$ in place of $m$), and let $W_{\xi_u} \subset W_{aff}$ be the stabilizer of $\xi_u$, so that the element $w_u$ defined in \S \ref{subsec:cl} is the longest element of $W_{\xi_u}$. Then $\BO(u)$ corresponds, under Springer correspondence, to Lusztig-Spaltenstein's $j$-induction $j_{W_{\xi_u}}^W \sgn$ of the sign representation of $W_{\xi_u}$, see \cite[Lemma 4.2.3]{SYZ:cl} or \cite[Proposition 4.8]{Yun:Epi}. Here we are identifying the subgroup $W_{\xi_u} \subset W_{aff}$ with a subgroup of $W$ via the projection $\pi: W_{aff} = W \ltimes Q \surj W$, where $Q$ is the coroot lattice of $\fg$. On the other hand, by \ref{cls:du-ADE}, $\bd^{(u)} \check \BO_{reg}$ is the unique open orbit in $\AV(\cU(\fg)/J_{\rho/u})$, which by \cite[3.10 Theorem]{Joseph:prim} or \cite[Proposition 2.13]{Bai-Gao-Wang-Xie} corresponds to $j_{W_{[\rho/u]}}^W \sgn$, where $W_{[\rho/u]}$ is the integral Weyl group of $\rho/u$. It remains to show that $W_{\xi_u}$, or rather $\pi(W_{\xi_u})$, is conjugate to $W_{[\rho/u]}$ inside $W$.
		
		Let $\Phi_{u \Lambda_0 + \rho} \subseteq \Phi_{aff}$ be the set of affine roots $\tilde \alpha$ that are orthogonal to $u \Lambda_0 + \rho$, and let $\Phi_{[\rho/u]} \subseteq \Phi = \Phi_{fin}$ be the set of finite roots integral to $\rho/u$. Let $W_{u \Lambda_0+ \rho} = \operatorname{Stab}_{W_{aff}}(u \Lambda_0 + \rho)$. Then $W_{u \Lambda_0+ \rho}$ (resp. $W_{[\rho/u]}$) is generated by reflections of roots in $\Phi_{u \Lambda_0 + \rho}$ (resp. $\Phi_{[\rho/u]}$). Suppose $\tilde \alpha = \alpha + n \delta$ where $\alpha \in \Phi$. Then $\tilde \alpha$ is in $\Phi_{u \Lambda_0 + \rho}$ if and only if $0 = \langle \check \alpha + n K, u \Lambda_0 + \rho \rangle = nu + \langle \check \alpha, \rho \rangle = nu + \operatorname{ht}(\alpha)$, if and only if $\tilde \alpha = \alpha - \frac{\operatorname{ht}(\alpha)}u \delta$, where $\operatorname{ht}(\alpha)$ denotes the height of $\alpha$. In particular, $u$ divides $\operatorname{ht}(\alpha)$. Hence $\langle \check \alpha, \frac \rho u \rangle = \frac{\operatorname{ht}(\alpha)}u \in \BZ$ and $\alpha \in \Phi_{[\rho/u]}$. Conversely, each $\alpha \in \Phi_{[\rho/u]}$ determines uniquely the affine root $\tilde \alpha = \alpha - \frac{\operatorname{ht}(\alpha)}u \delta$ in $\Phi_{u \Lambda_0 + \rho}$. In other words, we have a bijection
		\begin{equation*}
			\Phi_{u \Lambda_0 + \rho} \bijects \Phi_{[\rho/u]},\quad
			\tilde \alpha = \alpha - \frac{\operatorname{ht}(\alpha)}u \delta \mapsto \alpha.
		\end{equation*}
		It induces an isomorphism 
		\begin{equation*}
			W_{u \Lambda_0 + \rho} \bijects W_{[\rho/u]},\quad
			s_{\tilde \alpha} = s_{\big( \alpha - \frac{\operatorname{ht}(\alpha)}u \delta \big)} \mapsto s_\alpha.
		\end{equation*}
		In fact this isomorphism is the restriction of the projection $\pi: W_{aff} \surj W$ above, since $s_{\big( \alpha - \frac{\operatorname{ht}(\alpha)}u \delta \big)} = s_\alpha t_{\big(- \frac{\operatorname{ht}(\alpha)}u \check \alpha\big)}$ where $t_\mu \in W_{aff}$ denotes the translation operator of $\mu \in Q$. Hence we have $\pi\big( W_{u \Lambda_0 + \rho} \big) = W_{[\rho/u]}$.
		
		Finally, $W_{u \Lambda_0 + \rho}$ is a conjugate of $W_{\xi_u}$ by some element $w t_\mu \in W \ltimes Q = W_{aff}$ because $u \Lambda_0 + \rho$ is a $W_{aff}$-translate of $\xi_u$. Since $\pi \circ \Ad(w t_\mu) = \Ad(w) \circ \pi$, we have 
		\begin{equation*}
			\pi\big( W_{\xi_u} \big) = \pi\big( \Ad(w t_\mu) W_{u \Lambda_0 + \rho} \big) = \Ad(w) \pi\big(W_{u \Lambda_0 + \rho} \big) = \Ad(w) W_{[\rho/u]},
		\end{equation*}
		as required.
	\end{proof}
	
	As a result, our conjecture says
	\begin{equation*}
		X_{L_k} = \overline{\BO(u)}.
	\end{equation*}
	This was proven in \cite[Theorem 5.7.1]{Arakawa:C2}.
	
	The appearance of the orbits $\BO(m)$ on both sides of the Langlands duality is a fascinating phenomenon that seems to be a shadow of something deeper.
\end{clause}

\begin{clause}[$A_2$]
	Let $\fg = \fsl_3$. We have three families of levels based on the value of $m$.
	\begin{itemize}
		\item $m \ge \check \Bh = 3$. In this case $k$ is admissible, and we have seen in \ref{cls:adm} that our conjecture on $X_{L_k}$ holds.
		
		\item $m = 2$. In this case $\check \BO(2)$ is the minimal/subregular orbit. If $u =1$, then our conjecture says $X_{L_k} =\overline{\cS(\fl, \{0\})}$ where $\fl \subset \fg$ is any Levi of semisimple type $A_1$. This was proven in \cite{Jiang-Song, Arakawa-Moreau:sheets, AFK}. If $u > 1$, our conjecture says
		\begin{equation*}
			X_{L_k} = \overline{\cS(\fl, \BO_{L,reg})}
		\end{equation*}
		where $\fl$ is again of semisimple type $A_1$ and $\BO_{L,reg}$ is the regular orbit in $\fl$. This was confirmed by \cite[Theorem 1.3]{Jiang-Song}. Indeed, \textit{loc. cit.} says $X_{L_k} = \overline{\Ad G \cdot (\BC^\times (h_1 - h_2) + f_\theta)}$ where 
		\begin{equation*}
			h_1 = 
			\left(
			\begin{smallmatrix}
				1\\
				&-1\\
				&&0
			\end{smallmatrix}
			\right),\quad
			h_2 = 
			\left( 
			\begin{smallmatrix}
				0\\
				&1\\
				&&-1
			\end{smallmatrix}
			\right) ,\quad
			f_\theta =
			\left( 
			\begin{smallmatrix}
				&&1\\
				&0\\
				0
			\end{smallmatrix}
			\right).
		\end{equation*}
		If $\fl$ is chosen to contain the diagonal Cartan and has the highest root as its simple root, then $h_1 - h_2 \in \fz(\fl)^{reg}$, $f_\theta \in \BO_{L,reg}$, and hence $\overline{\Ad G \cdot (\BC^\times (h_1 - h_2) + f_\theta)} = \overline{J_G(\fl, \BO_{L,reg})} = \overline{\cS(\fl, \BO_{L,reg})}$.
		
		\item $m=1$. In this case $\check \BO(1)= 0$, and our conjecture says $X_{L_k} = \overline{\cS(\fh,0)} = \fg$. Indeed, by \cite[0.2.1 Theorem]{Gorelik-Kac}, $V^k$ is irreducible, and hence $L_k = V^k$, $X_{L_k} = X_{V^k} = \fg$.
	\end{itemize}
\end{clause}

\begin{clause}[$A_3$, $m/u = 3/2$]
	Let $\fg = \fsl_4$ and $m/u = 3/2$ (so that $k = -5/2$). In this case $\check \BO(3)$ is given by partition $[3,1]$ and is the saturation of the regular orbit $\BO_{\check L, reg}$ in any Levi $\check \fl$ of semisimple type $A_2$. We have $\bd_L^{(2)} \BO_{\check L, reg} = \BO_{L, min}$, the minimal/subregular orbit in $\fl$. Hence our conjecture says
	\begin{equation*}
		X_{L_k} = \overline{\cS(\fl, \BO_{L, min})}.
	\end{equation*}
	This agrees with \cite[Theorem 5.1]{Fasquel:thesis}. Indeed, it is proven in \textit{loc. cit.} that $X_{L_k} = \overline{J_G(x_0)}$ where 
	\begin{equation*}
		x_0 = \left(\begin{smallmatrix}
			3\\
			&-1 & & 1\\
			&& -1\\
			&&&-1
		\end{smallmatrix}\right)
		= \left(\begin{smallmatrix}
			3\\
			&-1\\
			&&-1\\
			&&&-1
		\end{smallmatrix}\right)
		+ \left(\begin{smallmatrix}
			0\\
			&0&&1\\
			&&0\\
			&&&0
		\end{smallmatrix}
		\vphantom{\begin{smallmatrix}
				3\\
				&-1\\
				&&-1\\
				&&&-1
		\end{smallmatrix}}\right)
		=: h+e
	\end{equation*}
	where $h$ (resp. $e$) is the semisimple part (resp. nilpotent part) of $x_0$. Then $\fl = \fz_\fg(h)$ is a Levi of semisimple type $A_2$, $h$ is in $\fz(\fl)^{reg}$, and $e$ is in the minimal nilpotent orbit in $\fl$. As a result $\overline{J_G(x_0)} = \overline{J_G(\fl, \BO_{L,min})} = \overline{\cS(\fl, \BO_{L,min})}$.
\end{clause}

\begin{clause}[$D_4, m = 4$]
	Let $\fg = \fso_8$ and $m = 4$. Then $\check \BO(4) = \check \BO_{sreg}$ is the subregular orbit, which is distinguished in $\check \fg^{(u)}$.
	\begin{itemize}
		\item If $u = 1$, our conjecture says $X_{L_k} = \overline{\bd \check \BO_{sreg}} = \overline{\BO_{min}}$ is the closure of the minimal orbit. This was confirmed by \cite[Theorem 1.1]{Arakawa-Moreau:Omin}. 
		
		\item If $u=3$, then our conjecture says $X_{L_k} = \overline{\bd^{(3)} \check \BO_{sreg}} = \overline{\BO_{sreg}}$. This was proven in \cite[Theorem 1.3]{Adamovic-Vukorepa}.
		
		\item If $u \ge 5$ is odd, our conjecture says $X_{L_k} = \overline{\bd^{(u)} \check \BO_{sreg}} = \overline{\BO_{reg}} = \cN$. This aligns with \cite[Conjecture 1.1, Theorem 1.2]{Adamovic-Vukorepa}.
	\end{itemize}
\end{clause}

\begin{clause}[$E_6, m/u = 9/7$]
	In this case $\check \BO(9) = \check \BO_{E_6(a_1)} = \check \BO_{sreg}$ and our conjecture says $X_{L_k} = \overline{\bd^{(7)} \check \BO_{sreg}} = \overline{\check \BO_{sreg}}$. This agrees with \cite[\S5.2, Conjecture 3]{AACLMMP}.
	
	More evidence for $m = 9$ is contained in \ref{cls:collapse-E} below.
\end{clause}

%-------------
\subsection{Relation to $\cW$-algebras}

In the next subsection, we collect some computational results and conjectures on $\cW$-algebras as evidence to our conjecture. In this subsection we recall relevant notions and explain the connection between these results and our main conjecture.

\begin{clause}[Affine $\cW$-algebras and Slodowy slices]
	Let $f \in \cN$ and write $\BO_f$ for the orbit of $f$. Write
	\begin{equation*}
		\BS_f = f + \fz_\fg(e)
	\end{equation*}
	for the Slodowy slice at $f$, where $\{h,e,f\}$ is an $\fsl_2$-triple containing $f$ and $\fz_\fg(e)$ is the centralizer of $e$ in $\fg$.
	
	The \textit{universal affine $\cW$-algebra} attached to $\fg$, $f$ and $k$ is the vertex algebra
	\begin{equation*}
		\cW^k(\fg,f) = H_f^0(V^k(\fg))
	\end{equation*}
	obtained as the result of quantized Drinfeld-Sokolov reduction applied to $V^k(\fg)$, see \cite{Feigin-Frenkel}, \cite{Kac-Roan-Wakimoto}, or \cite[\S 4]{Arakawa:C2}. Its associated variety is equal to the Slodowy slice $\BS_f$. The unique simple quotient of $\cW^k(\fg,f)$ will be denoted by $\cW_k(\fg,f)$.
	
	The following facts are proven in \cite{Arakawa:C2}.
	\begin{enumerate}
		\item $H_f^0(L_k) \neq 0$ if and only if $\BO_f \subseteq X_{L_k}$.
		\item If $\BO_f \subseteq X_{L_k}$, we have $X_{H_f^0(L_k)} = X_{L_k} \cap \BS_f$.
		\item The quotient map $V^k \surj L_k$ induces a quotient map $\cW^k(\fg,f) = H_f^0(V^k) \surj H_f^0(L_k)$. In particular, if $\BO_f \subseteq X_{L_k}$, then $H_f^0(L_k)$ admits a quotient map onto $\cW_k(\fg,f)$.
	\end{enumerate}
	It is conjectured by Kac-Wakimoto \cite{Kac-Wakimoto:rat-W} that if $\BO_f \subseteq X_{L_k}$, $H_f^0(L_k)$ is isomorphic to $\cW_k(\fg,f)$. This would imply
	\begin{equation*}
		X_{\cW_k(\fg,f)} \xeq{conj.} X_{H_f^0(L_k(\fg))} = X_{L_k(\fg)} \cap \BS_f
	\end{equation*}
	and hence
	\begin{equation*}%\label{eqn:W-dim}
		\dim X_{\cW_k(\fg,f)} \xeq{conj.} \dim X_{L_k(\fg)} \cap \BS_f.
	\end{equation*}
	If Conjecture \ref{conj:main} holds, then by (\ref{eqn:dim-of-sheet}) and \cite[Corollary 1.3.8(iii)]{Ginzburg:HC-Slodowy} the right side can be rewritten as
	\begin{equation*}
		\dim X_{L_k(\fg)} \cap \BS_f = \dim \overline{\cS(\fl, \bd^{(u)}_L \BO_{\check L^{(u)}})} - \dim \BO_f = \dim \fz(\fl) + \dim \bd^{(u)} \check \BO(m) - \dim \BO_f.
	\end{equation*}
\end{clause}

\begin{clause}[Collapsing levels]
	Let $\cW_k(\fg,f)$ be the simple $\cW$-algebra as defined above. The level $k$ is said to be \textit{collapsing} for $\cW_k(\fg,f)$ if 
	\begin{equation*}
		\cW_k(\fg,f) \cong L_{k^\natural}(\fg^\natural)
	\end{equation*}
	where $\fg^\natural$ is the centralizer of an $\fsl_2$-triple containing $f$, and $k^\natural$ is determined by the condition that $V^{k^\natural}(\fg^\natural)$ is a vertex subalgebra of $\cW^k(\fg,f)$. If moreover $\fg^\natural = \fg_1^\natural \oplus \cdots \oplus \fg_s^\natural$ where each $\fg_i^\natural$ is either abelian or simple, then
	\begin{equation*}
		L_{k^\natural}(\fg^\natural) = L_{k_1^\natural}(\fg_1^\natural) \otimes \cdots \otimes L_{k_s^\natural}(\fg_s^\natural)
	\end{equation*}
	for some levels $k_i^\natural$. Further assuming $\BO_f \subseteq X_{L_k(\fg)}$ and Kac-Wakimoto's conjecture that $H_f^0(L_k(\fg)) \cong \cW_k(\fg,f)$, we obtain isomorphisms of Poisson varieties
	\begin{subequations}
		\begin{equation}\label{eqn:collapse-AV}
			X_{L_k(\fg)} \cap \BS_f = X_{\cW_k(\fg,f)} \cong X_{L_{k_1^\natural}(\fg_1^\natural)} \times \cdots \times X_{L_{k_s^\natural}(\fg_s^\natural)}
		\end{equation}
		and
		\begin{equation}\label{eqn:collapse-AVN}
			X_{L_k(\fg)} \cap \cN \cap \BS_f \cong \big( X_{L_{k_1^\natural}(\fg_1^\natural)} \cap \cN_1^\natural \big) \times \cdots \times \big( X_{L_{k_s^\natural}(\fg_s^\natural)} \cap \cN_s^\natural \big)
		\end{equation}
		where $\cN_i^\natural$ is the nilpotent cone of $\fg_i^\natural$. 
	\end{subequations}	
	When the associated varieties on both sides of (\ref{eqn:collapse-AV}-\ref{eqn:collapse-AVN}) are either known or are covered by our conjecture, the equalities on dimensions between the two sides provide a partial evidence for our conjecture. 
\end{clause}

\begin{clause}[Singularities in nilpotent orbit slices]\label{cls:sing}
	According to our conjecture, $X_{L_k} \cap \cN = \overline{\BO}$ for some nilpotent orbit $\BO$, and the left side of (\ref{eqn:collapse-AVN}) is the intersection $\overline{\BO} \cap \BS_f$. Some such intersections have been computed in \cite{Kraft-Procesi:sing-A,Kraft-Procesi:sing-BCD} in classical types and in \cite{Fu-Juteau-Levy-Sommers} in exceptional types. In those cases it is possible to verify the equality (\ref{eqn:collapse-AVN}) in a stronger sense.
	
	In Claim \ref{clm:sing-A} and \ref{clm:sing-D} we match the singularity types on both sides of (\ref{eqn:collapse-AVN}) in certain cases. Recall that two pointed varieties $(X,x)$, $(Y,y)$ are said to be \textit{smoothly equivalent} if there exists another pointed variety $(Z,z)$ together with morphisms
	\begin{equation*}
		(X,x) \xleftarrow{\varphi} (Z,z) \xrightarrow{\psi} (Y,y)
	\end{equation*}
	which are smooth at $z$. This defines an equivalence relation on pointed varieties, and we denote this by $\operatorname{Sing}(X,x) = \operatorname{Sing}(Y,y)$, see \cite[\S2.1 Definition]{Kraft-Procesi:sing-A}. 
	
	In \ref{cls:collapse-E} and Tables \ref{tbl:E6}-\ref{tbl:E8} below we include descriptions of the pointed spaces $\big( X_{L_k(\fg)} \cap \cN \cap \BS_f,f  \big) = \big( \overline{\BO} \cap \BS_f, f \big)$ up to algebraic isomorphisms (not just the singularity type) whenever they were computed in \cite{Fu-Juteau-Levy-Sommers}. There are four types of descriptions with the following meanings: we say a pointed space $(X,x)$ is a
	\begin{itemize}
		\item Singleton, denoted by ``$\ast$'', if $X = \{x\}$;
		
		\item Kleinian/simple surface singularity ``$\cK$'', where $\cK\in \{A_n,B_n,C_n,D_n,E_6,E_7,E_8,F_4,G_2\}$, if $( X, x ) \cong ( \cN' \cap \BS_{f'}, f')$ where $\cN'$ is the nilpotent cone of a simple Lie algebra $\fg'$ of type $\cK$, and $f'$ is a subregular nilpotent element of $\fg'$;
		
		\item Minimal singularity ``$\cM$'', where $\cM \in \{a_n,b_n,c_n,d_n,e_6,e_7,e_8,f_4,g_2\}$, if $(X,x) \cong (\overline{\BO_{min}'},0)$ where $\BO_{min}'$ is the minimal nilpotent orbit in a simple Lie algebra $\fg'$ of type $\cM$, and $0$ is the zero element in $\fg'$;
		
		\item Full nilpotent cone ``$\cN_{\fg'}$'' if $(X,x) \cong (\cN_{\fg'},0)$ where $\fg'$ is a semisimple Lie algebra.
	\end{itemize}
\end{clause}

%--------
\subsection{Evidence from $\cW$-algebras}

\begin{clause}[Evidence from \cite{Arakawa-Creutzig-Kawasetsu}]
	Consider the following cases, taken from \cite[Table 1]{Arakawa-Creutzig-Kawasetsu}:
	\begin{center}
		\begin{tabular}{c|cccc}
			$\fg$ & $\fso_8$ & $\fe_6$ & $\fe_7$ & $\fe_8$ \\ \hline
			$m/u$ & $5/4$ & $10/9$ & $15/14$ & $25/24$\\
			& & $11/10$ & $16/15$ & $26/25$\\
			&&& $17/16$ & $27/26$\\
			&&&& $28/27$\\
			&&&& $29/28$
		\end{tabular}
	\end{center}
	In all these cases, $\check \BO(m) = \check \BO_{sreg}$ is the subregular orbit which is distinguished, and $\bd^{(u)} \check \BO_{sreg} = \BO_{reg}$ is the regular orbit. Hence our conjecture says
	\begin{equation*}
		X_{L_k} = \overline{ \bd^{(u)} \check \BO_{sreg} } = \overline{\BO_{reg}} = \cN.
	\end{equation*}
	Consequently, for $f_{reg} \in \BO_{reg}$, we should have
	\begin{equation*}
		X_{H_{f_{reg}}^0(L_k)} = X_{L_k} \cap \BS_{f_{reg}} = \{f\}
	\end{equation*}
	and $H_{f_{reg}}^0(L_k)$ is lisse. In particular, as a quotient of $H_{f_{reg}}^0(L_k)$, $\cW_k(\fg,f_{reg})$ is lisse. 
	
	The lisse-ness of these $\cW_k(\fg,f_{reg})$ was conjectured in \cite[Conjecture 3]{Arakawa-Creutzig-Kawasetsu} and, for the levels on the first row, was proven in Theorem 1.2 of \textit{op. cit.}
\end{clause}

\begin{clause}[Evidence from \cite{AACLMMP}]
%	The work \cite{AACLMMP} contains a large list of collapsing levels. In the remainder of this subsection, we collect from \textit{op. cit.} the cases where $\fg$ is simply-laced and $L_k(\fg)$ is non-admissible. Whenever $f \in X_{L_k(\fg)}$, we verify the equalities on the dimensions of associated varieties resulting from (\ref{eqn:collapse-AV}-\ref{eqn:collapse-AVN}) assuming our conjecture on $X_{L_k(\fg)}$. We also list the singularities of $X_{L_k(\fg)} \cap \cN \cap \BS_f$ and verify the equality (\ref{eqn:collapse-AVN}) or $\operatorname{Sing}$(\ref{eqn:collapse-AVN}) whenever they are computed (see \ref{cls:sing} for notations).

	The work \cite{AACLMMP} contains a large list of collapsing levels. In the remainder of this subsection, we collect from \textit{op. cit.} the cases where $\fg$ is simply-laced and $L_k(\fg)$ is non-admissible. Whenever $f \in X_{L_k(\fg)}$, we list the singularities of $X_{L_k(\fg)} \cap \cN \cap \BS_f$ and verify the equality (\ref{eqn:collapse-AVN}) or $\operatorname{Sing}$(\ref{eqn:collapse-AVN}) whenever they are computed (see \ref{cls:sing} for notations). We then verify the equalities on the dimensions of associated varieties resulting from (\ref{eqn:collapse-AV}-\ref{eqn:collapse-AVN}) assuming our conjecture on $X_{L_k(\fg)}$. 
\end{clause}

\begin{clause}[Type $A$]\label{cls:collapse-A}
	Let $\fg = \fsl_n$. Let $[q^l,1^r]$ be a partition of $n$ such that $q \ge 2$, $l \ge 1$, and let $f \in \BO_{[q^l,1^r]}$. Let $k = -n + \frac{n-l}{q-1}$. Then by \textit{op. cit.} \S 4.1 we have
	\begin{equation*}
		\cW_{-n+\frac{n-l}{q-1}}(\fsl_n,[q^l,1^r]) = \cW_k(\fg,f) 
		\cong L_{k^\natural}(\fg^\natural) := M(1) \otimes L_{-l+\frac{n-l}{q-1}}(\fsl_l) \otimes L_{-r+\frac{r}{q-1}}(\fsl_r).
	\end{equation*}
	Here $M(1)$ is the rank $1$ Heisenberg vertex algebra. For simplicity, we only consider the cases where the fractions appearing in the levels are in lowest terms, that is, $\gcd(n-l,q-1) = \gcd(r,q-1) =1$. We have $n-l \ge l$ (this is equivalent to $(q-2)l + r \ge 0$, which is true by the assumption that $q \ge 2$), so together with the condition $\gcd(n-l,q-1) = \gcd(r,q-1) =1$, the last two factors on the right hand side are admissible. 
	
	Write $l = a(q-1)+b$, $r = c(q-1)+d$ with $b,d <q-1$. Write $\BO_c(m)$ for the orbit $\BO(m)$ inside $\fsl_c$. The unique open orbit in $X_{L_k(\fg)}$ is, according to \cite[\S 3]{Gao-Liu-Lo-Shahidi},
	\begin{equation*}
		\bd^{(q-1)}\check \BO_n(n-l) = \bd^{(q-1)} \check \BO_{[n-l,l]} = \BO_{[(q-1)^{l+c},d]+[(q-1)^a,b]}
		= \BO_{[(2q-2)^a,q-1+b,(q-1)^{l+c-a-1},d]}.
	\end{equation*}
	This visibly contains $f$ in its closure. Similarly
	\begin{equation*}
		X_{L_{-l+\frac{n-l}{q-1}}(\fsl_l)} \cap \cN(\fsl_l)
		= \overline{\bd^{(q-1)} \check \BO_l(n-l)} = \overline{\BO_l(u)} = \overline{\BO_{[(q-1)^a,b]}}
	\end{equation*}
	and
	\begin{equation*}
		X_{L_{-r+\frac{r}{q-1}}(\fsl_r)} \cap \cN(\fsl_r) = \overline{\bd^{(q-1)} \check \BO_r(r)} = \overline{\BO_r(q-1)} = \overline{\BO_{[(q-1)^c,d]}}.
	\end{equation*}
	
	\begin{claim}\label{clm:sing-A}
		Suppose $l = 1$ (i.e. $a=0$ and $b=l$). Assume Conjecture \ref{conj:main} holds. Then $f \in X_{L_k(\fg)}$ and $\operatorname{Sing}$(\ref{eqn:collapse-AVN}) holds, that is
		\begin{equation*}
			\operatorname{Sing}\big( X_{L_k(\fg)} \cap \cN \cap \BS_f, f \big)
			= \operatorname{Sing}\big( X_{L_{k^\natural}(\fg^\natural)} \cap \cN^\natural, 0 \big).
		\end{equation*}
	\end{claim}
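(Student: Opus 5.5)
The plan is to reduce the claim to the Kraft--Procesi row-removal rule for singularities of nilpotent orbit closures in type $A$. We take $l=1$ with $a=0$, $b=1$. This forces $q-1>1$, which is also the case where $k$ is genuinely non-integral; when $q=2$ we have $u=1$, and the same argument works after rewriting $l=1=1\cdot(q-1)+0$. Since $l=1$, we have $r=n-q$, and the factor $L_{-l+\frac{n-l}{q-1}}(\fsl_l)=L(\fsl_1)$ is trivial. The Heisenberg factor $M(1)$ has associated variety $\BC$, whose nilpotent cone is the single point $0$. So the right-hand side of (\ref{eqn:collapse-AVN}) reduces to the pointed variety
\begin{equation*}
	\big(\overline{\BO_{[(q-1)^c,d]}},0\big) \subset \cN(\fsl_r).
\end{equation*}
This uses the description of $X_{L_{-r+\frac{r}{q-1}}(\fsl_r)}\cap\cN(\fsl_r)$ given above, which holds by Arakawa's theorem and Proposition \ref{prop:du-reg}, since this level is admissible.

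On the left-hand side, Conjecture \ref{conj:main} and the formula above give $X_{L_k}\cap\cN=\overline{\BO_\lambda}$ with
\begin{equation*}
	\lambda=[q,(q-1)^{c},d],
\end{equation*}
since $a=0$ and $b=1$. The element $f$ lies in $\BO_\mu$ with $\mu=[q,1^r]$. First, $\mu\le\lambda$ in the dominance order: the first parts agree, and the remaining part $[1^r]$ is dominated by every partition of $r$. Hence $f\in X_{L_k}$. Second, by the standard property of Slodowy slices, $\overline{\BO_\lambda}\cap\BS_f$ is a transverse slice to $\BO_\mu$ in $\overline{\BO_\lambda}$ at $f$. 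Its smooth equivalence class is therefore $\operatorname{Sing}(\overline{\BO_\lambda},\BO_\mu)$.

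The key step is the Kraft--Procesi cancellation rule \cite[\S3]{Kraft-Procesi:sing-A}. If two partitions $\lambda\ge\mu$ share their first row, then $\operatorname{Sing}(\overline{\BO_\lambda},\BO_\mu)=\operatorname{Sing}(\overline{\BO_{\lambda'}},\BO_{\mu'})$, where $\lambda'$ and $\mu'$ are obtained by deleting that common row. Deleting the row $q$ gives $\lambda'=[(q-1)^c,d]$ and $\mu'=[1^r]$, which are partitions of $r$. This yields exactly $\operatorname{Sing}(\overline{\BO_{[(q-1)^c,d]}},0)$ in $\fgl_r$. Passing from $\fgl_r$ to $\fsl_r$ does not change nilpotent orbit closures, and this matches the right-hand side.

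The main point to handle carefully is the bookkeeping in the covering-duality formula for $\lambda$. In particular, with $a=0$ the term $[(q-1)^a,b]=[1]$ must be added to the first row, giving $q-1+1=q$, and the exponent $l+c-a-1$ must come out as $c$. After that, the only input is the row-removal theorem, applied after checking that its hypothesis of equal first rows holds.
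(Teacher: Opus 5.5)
Your proposal is correct and follows the paper's argument. You identify $X_{L_k}\cap\cN=\overline{\BO_{[q,(q-1)^c,d]}}$ and the right side as $0\times\overline{\BO_{[(q-1)^c,d]}}$, pass from the Slodowy slice to $\operatorname{Sing}(\overline{\BO_\lambda},\BO_\mu)$, and delete the common first row $q$ by Kraft--Procesi row removal, which is exactly the paper's appeal to \cite[\S3.1 Proposition, \S4.1 Lemma]{Kraft-Procesi:sing-A}. Your remark on $q=2$, where $a=0,b=1$ is impossible and both sides are points, is a harmless check the paper leaves implicit.
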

	
	In this case, we have
	\begin{align*}
		X_{L_k(\fg)} \cap \cN \cap \BS_f &= \overline{\BO_{[q,(q-1)^c,d]}} \cap \BS_{[q,1^r]}\\
		X_{L_{k^\natural}(\fg^\natural)} \cap \cN^\natural &= 0 \times \overline{\BO_{[(q-1)^c,d]}}
	\end{align*}
	(note that the $\fsl_l = \fsl_1$ factor is gone). By \cite[\S3.1 Proposition, \S4.1 Lemma]{Kraft-Procesi:sing-A}, we have
	\begin{equation*}
		\operatorname{Sing}\big( \overline{\BO_{[q,(q-1)^c,d]}} \cap \BS_{[q,1^r]}, f \big)
		= \operatorname{Sing}\big( \overline{\BO_{[q,(q-1)^c,d]}}, \BO_{[q,1^r]} \big)
		= \operatorname{Sing}\big( \overline{\BO_{[(q-1)^c,d]}}, 0 \big).
	\end{equation*}
	This proves the claim. \qed
	
	\begin{claim}\label{clm:collapse-A}
		Assume Conjecture \ref{conj:main} holds. Then in the setup of \ref{cls:collapse-A} (without assuming $l=1$),
		\begin{enumerate}
			\item $f \in X_{L_k(\fg)}$;
			\item $\dim$(\ref{eqn:collapse-AV}) holds, that is, $\dim X_{L_k(\fg)} \cap \BS_f = \dim X_{L_{k^\natural}(\fg^\natural)}$;
			\item $\dim$(\ref{eqn:collapse-AVN}) holds, that is, $\dim X_{L_k(\fg)} \cap \cN \cap \BS_f = \dim X_{L_{k^\natural}(\fg^\natural)} \cap \cN^\natural$.
		\end{enumerate}
	\end{claim}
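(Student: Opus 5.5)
The plan is to make Conjecture \ref{conj:main} explicit for these levels, then reduce all three assertions to statements about partitions, using the formula for the open orbit $\bd^{(q-1)}\check \BO_n(n-l)$ recorded above.

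\emph{Reading off the conjecture.} Here $k+\check\Bh = \frac{n-l}{q-1}$ in lowest terms, so $m = n-l$ and $u = q-1$. Since $n-l\ge l$, we have $\check\BO(m)=\check\BO_{[n-l,l]}$. For $l\ge 1$ this orbit is not distinguished: its Bala--Carter Levi has semisimple type $A_{n-l-1}\times A_{l-1}$, and so $\dim\fz(\fl)=1$. Set
\begin{equation*}
	\BO' := \BO_{[(2q-2)^a,\,q-1+b,\,(q-1)^{l+c-a-1},\,d]}.
\end{equation*}
By Corollary (1) of the conjecture, $X_{L_k}\cap\cN=\overline{\BO'}$. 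By the conjecture together with (\ref{eqn:dim-of-sheet}), $\dim X_{L_k}=1+\dim\BO'$.

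\emph{Step (1): $f\in X_{L_k}$.} I will show $[q^l,1^r]\le \BO'$ in the dominance order, using that $\BO'$ is the columnwise sum of $\lambda=[(q-1)^{l+c},d]$ and $\mu=[(q-1)^a,b]$, where $\mu$ is a partition of $l$. For $j\le l$, the first $j$ parts of $\BO'$ sum to at least $j(q-1)+\min(j,l)= jq$, because every part of $\mu$ is $\ge 1$, so the first $j$ parts of $\mu$ sum to at least $\min(j,l)$. For $l<j\le l+c$ the sum is $j(q-1)+l$, and this is $\ge ql+(j-l)$ because it is equivalent to $(j-l)(q-2)\ge 0$. The remaining partial sums are handled by the trailing part $d$ and the equality of totals.

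\emph{Steps (2) and (3).} These reduce to one identity. By \cite[Corollary 1.3.8(iii)]{Ginzburg:HC-Slodowy} (transversality of $\BS_f$), the left sides equal $1+\dim\BO'-\dim\BO_f$ and $\dim\BO'-\dim\BO_f$ respectively. On the right-hand side, $X_{M(1)}=\BC$ contributes $1$ to (2) and the point $0$ to (3). Both $\fsl$-factors are admissible, so by \ref{cls:adm} and Proposition \ref{prop:du-reg} their associated varieties are $\overline{\BO_{[(q-1)^a,b]}}\subset\fsl_l$ and $\overline{\BO_{[(q-1)^c,d]}}\subset\fsl_r$, which are already contained in the nilpotent cones. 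Hence both (2) and (3) are equivalent to
\begin{equation*}
	\dim\BO'-\dim\BO_{[q^l,1^r]}=\dim\BO_{[(q-1)^a,b]}+\dim\BO_{[(q-1)^c,d]}.
\end{equation*}

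\emph{Verifying the identity.} I will use $\dim\BO_\nu = N^2-\sum_i(\nu^t_i)^2$ and the rule $(\lambda+\mu)^t=\lambda^t\cup\mu^t$. The relevant transposes are:
\begin{align*}
	(\BO')^t &= \big[(l+c+1)^d,(l+c)^{q-1-d},(a+1)^b,a^{q-1-b}\big],\\
	[q^l,1^r]^t &= \big[l+r,\,l^{q-1}\big],\\
	[(q-1)^c,d]^t &= \big[(c+1)^d,c^{q-1-d}\big].
\end{align*}
Write the $q-1$ parts $l+c+\varepsilon_i$ of $(\BO')^t$ coming from $\lambda$, so that $c+\varepsilon_i$ are the parts of $[(q-1)^c,d]^t$ and $\sum_i(c+\varepsilon_i)=r$. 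Expanding,
\begin{equation*}
	\sum_i(l+c+\varepsilon_i)^2=(q-1)l^2+2lr+\sum_i(c+\varepsilon_i)^2.
\end{equation*}
The left side of the identity is $(l+r)^2+(q-1)l^2-\sum(\BO')^t_i{}^2$. Substituting the expansion, it becomes $l^2+r^2-\sum_i(c+\varepsilon_i)^2-\sum\big([(q-1)^a,b]^t_i\big)^2$, which is exactly the right side.

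\emph{Main obstacle.} I expect the delicate point to be bookkeeping rather than theory. One must confirm that the conjecture's Levi has one-dimensional center (so that the $M(1)$ factor matches $\fz(\fl)$), and that the identification of the sub-factor levels as admissible with $u=q-1$ is correct. The dominance check in Step (1) also needs care, particularly in the range $j\le l$.
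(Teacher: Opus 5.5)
Your proposal is correct and follows essentially the same route as the paper. Part (1) is the dominance check $[q^l,1^r]\le[(2q-2)^a,q-1+b,(q-1)^{l+c-a-1},d]$, part (3) is Slodowy-slice transversality plus the orbit dimension formula, and part (2) matches $\dim\fz(\fl)=1$ for the Levi $\fs(\fgl_{n-l}\times\fgl_l)$ with $X_{M(1)}=\BC$; the only difference is that you carry out explicitly, via $(\lambda+\mu)^t=\lambda^t\cup\mu^t$, the dimension identity and the dominance check that the paper leaves as a ``straightforward'' verification and calls ``visible''.
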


%	Since $n-l$ and $q-1$ may be not coprime, we let
%	\begin{itemize}
%		\item $d = \gcd(n-l,q-1)$,
%		\item $m = \frac{n-l}d$,
%		\item $u = \frac{q-1}d$.
%	\end{itemize}
%	Then $\frac mu = \frac{n-l}{q-1} = k+ n$ and $\gcd(m,u) = 1$. Since $n = ql+r$, we have $n-l = (q-1)l+r$, and hence $d = \gcd(r,q-1)$. 
%	
%	\begin{lemma}
%		We have $n = dm+l$ and $l < m$.
%	\end{lemma}
%	
%	\begin{proof}
%		Write
%		\begin{itemize}
%			\item $n = xm + t$ where $x$ is the largest integer so that $xm \le n$,
%			\item $l = ym+v$ where $y$ is the largest integer so that $ym \le l$.
%		\end{itemize}
%		Then $md = n-l = (x-y)m + t-v$, and $(x-y-d)m = v-t$. By definition both $v$ and $t$ are smaller than $m$, and so we must have $x-y-d = v-t=0$. As a result 
%		\begin{align*}
%			r &= n - l - (q-1)l = md - du(ym+t)\\
%			&= (1-yu)dm - tdu 
%		\end{align*}
%		and $r \ge 0$ forces either
%		\begin{itemize}
%			\item $y=u=1$ and $t=0$, or
%			\item $y=0$.
%		\end{itemize}
%		In the first case, the equation $n = ql + r$ becomes $(d+1)m = 1 \cdot m + d m$ and $q = 1$ which violates the assumption that $q \ge 2$. In the second case, we have $x = y+d = d$ and $t = v = l < m$, as required.%Finally, $r \ge 0$ becomes $d(m-lu) \ge 0$ which implies $lu \le m$.
%	\end{proof}

	The equality $\dim X_{L_k(\fg)} \cap \cN \cap \BS_f = \dim X_{L_{k^\natural}(\fg^\natural)} \cap \cN^\natural$, which is equivalent to $\dim \bd^{(q-1)}\check \BO_n(n-l) - \dim \BO_{[q^l,1^r]} = \dim \bd^{(q-1)} \check \BO_l(n-l) + \dim \bd^{(q-1)} \check \BO_r(r)$, can be verified using the dimension formula for nilpotent orbits, see for example \cite[Corollary 6.1.4]{Collingwood-McGovern}. By the dimension formula for sheets (\ref{eqn:dim-of-sheet}), part (2) of Claim \ref{clm:collapse-A} amounts to saying that the center of the Bala-Carter Levi of $\check \BO_n(n-l)$ on the left side is equal to $1 = \dim X_{M(1)}$. This is true, as the Bala-Carter Levi of $\check \BO_n(n-l)$ is $\fs(\fgl_{n-l} \times \fgl_l)$, where $\fs(-)$ takes trace zero elements. %This completes the verification of Claim \ref{clm:collapse-A}. 
	\qed
	
	%We are also able to verify that the singularity types of the two sides of (\ref{eqn:collapse-AVN}) match, whenever they were computed in \cite{Kraft-Procesi:sing-A}. We omit the details.

\end{clause}

\begin{clause}[Type $D$]\label{cls:collapse-D}
	Let $\fg = \fso_{2n}$. Let $[q^l,1^r]$ be a partition of $2n$ of type $D$ such that $q \ge 2$, $l \ge 1$, and let $f \in \BO_{[q^l,1^r]}$. By \cite[\S 4.2 and Table 1]{AACLMMP}, we have
	\begin{subequations}
	\begin{enumerate}[label=(\roman*)]
		\item If $q$ is odd,
		\begin{equation}\label{eqn:collapse-D-qodd}
			\cW_{-(2n-2) + \frac{2n-l}{q-1}}(\fso_{2n}, [q^l, 1^r]) =: \cW_k(\fg,f) \cong L_{k^\natural}(\fg^\natural) := L_{-(l-2) + \frac{2n-l}{q-1}} (\fso_l) \otimes L_{-(r-2)+ \frac r{q-1}}(\fso_r);
		\end{equation}
		\item If $q$ is even, $r>2$,
		\begin{equation}\label{eqn:collapse-D-qeven}
			\cW_{-(2n-2) + \frac{2n-2-l}{q-1}}(\fso_{2n}, [q^l, 1^r]) =: \cW_k(\fg,f) \cong L_{k^\natural}(\fg^\natural) := L_{-(\frac l2+1) + \frac{n-1-l/2}{q-1}}(\fsp_l) \otimes L_{-(r-2) + \frac{r-2}{q-1}}(\fso_r).
		\end{equation}
	\end{enumerate}
	\end{subequations}
	For simplicity, we only consider the cases where the fractions appearing in the levels are all in lowest terms. Since we do not yet have a general conjecture for $X_{L_{k^\natural}(\fg^\natural)}$ in type $B$ and $C$, we restrict ourselves to the cases where the right hand sides are admissible. 
	
	Suppose $q$ is odd. By our assumption, $\gcd(2n-l,q-1) = \gcd(r,q-1) = 1$. In particular, $l$ and $r$ are odd numbers. In this case $\check \BO(2n-l)$ is distinguished. To see this, notice that $2n-l > n$ (indeed, this is equivalent to $n-l = (\frac12 q - 1)l + \frac12 r > 0$ which is true since $q \ge 3$, $l \ge 1$, $r \ge 0$), and hence $\check \BO(2n-l)$ corresponds to the partition $[2n-l,l]$ with $2n-l>l$, indeed distinguished. As a result, $L_k(\fg)$ and hence both sides of (\ref{eqn:collapse-D-qodd}) are quasi-lisse. 
	
	We now write down the partition of the orbit dense in $X_{L_k(\fg)}$ and $X_{L_{k^\natural}(\fg^\natural)}$ using the formula for $\bd^{(u)}$ in \cite[\S 3]{Gao-Liu-Lo-Shahidi} (with a slight distinction, see Remark \ref{rmk:Spin-vs-SO}) and the formula for admissible representations in \cite{Arakawa:C2}. Write $l = a(q-1)+b$ where $a$ is the largest integer so that $a(q-1) \le l$. Similarly write $r = c(q-1)+d$.  
	\begin{center}
		\resizebox{\textwidth}{!}{
			\begin{tabular}{c|lll}
				Parity for $(a,c)$ & $X_{L_{-(2n-2) + \frac{2n-l}{q-1}}(\fso_{2n})}$ & $X_{L_{-(l-2) + \frac{2n-l}{q-1}} (\fso_l)}$ & $X_{ L_{-(r-2)+ \frac r{q-1}}(\fso_r)}$
				\\ \hline
				(even,even) & $[ (2q-2)^a,q-1+b,(q-1)^{c+l-a-1},d ]$ & $[(q-1)^a,b]$ & $[(q-1)^c,d]$\\
				(even,odd) & $[(2q-2)^a,q-1+b,(q-1)^{c+l-a-2},q-2,d,1]$ & $[(q-1)^a,b]$ & $[(q-1)^{c-1},q-2,d,1]$\\
				(odd,even) & $[(2q-2)^{a-1},2q-3,q-1+b,q,(q-1)^{c+l-a-2},d]$ & $[(q-1)^{a-1},q-2,b,1]$ & $[(q-1)^c,d]$\\
				(odd,odd) & $[(2q-2)^{a-1}, 2q-3, q-1+b,q,(q-1)^{c+l-a-3},q-2,d,1]$ & $[(q-1)^{a-1},q-2,b,1]$ & $[(q-1)^{c-1},q-2,d,1]$
			\end{tabular}
		}
	\end{center}
	
	Suppose now $q$ is even. Then both $l$ and $r$ are even. The right hand side $L_{k^\natural}(\fg^\natural)$ is admissible precisely when $n-1-l/2 \ge l/2+1$, i.e. when $n-2 \ge l$. In this case $\check \BO(2n-2-l)$ is distinguished, since $n-2 \ge l$ implies $2n-2-l \ge n$, and $\check \BO(2n-2-l)$ is given by the partition $[2n-1-l,l+1]$. As a result, both sides of (\ref{eqn:collapse-D-qeven}) are quasi-lisse. To write down the precise partitions of the orbits, we again let $l = a(q-1)+b$ and $r = c(q-1)+d$ with $b,d < q-1$. Then $a$ and $b$ (resp. $c$ and $d$) must have the same parity.
	
	\begin{center}
		\resizebox{\textwidth}{!}{
			\begin{tabular}{c|lll}
				Conditions for $(a,c,d)$ & $X_{L_{-(2n-2) + \frac{2n-2-l}{q-1}}(\fso_{2n})}$ & $X_{L_{-(\frac l2+1) + \frac{n-1-l/2}{q-1}}(\fsp_l)}$ & $X_{L_{-(r-2) + \frac{r-2}{q-1}}(\fso_r)}$
				\\ \hline
				(even,even,>0) & $[(2q-2)^a, q+b-1, (q-1)^{l+c-a-1}, d-1,1]$ & $[(q-1)^a,b]$ & $[(q-1)^c,d-1,1]$\\
				(even,even,=0) & $[(2q-2)^a, q+b-1, (q-1)^{l+c-a-1}]$ & $[(q-1)^a,b]$ & $[(q-1)^c]$\\
				(even,odd,any) & $[(2q-2)^a, q+b-1, (q-1)^{l+c-a-1}, d]$ & $[(q-1)^a,b]$ & $[(q-1)^c,d]$\\
				(odd,even,>0)  & $[(2q-2)^{a-1}, 2q-3, q+b, (q-1)^{l+c-a-1}, d-1, 1]$ & $[(q-1)^{a-1},q-2,b+1]$ & $[(q-1)^c,d-1,1]$\\
				(odd,even,=0)  & $[(2q-2)^{a-1}, 2q-3, q+b, (q-1)^{l+c-a-1}]$ & $[(q-1)^{a-1},q-2,b+1]$ & $[(q-1)^c]$\\
				(odd,odd,any)  & $[(2q-2)^{a-1}, 2q-3, q+b, (q-1)^{l+c-a-1}, d]$ & $[(q-1)^{a-1},q-2,b+1]$ & $[(q-1)^c,d]$
			\end{tabular}
		}
	\end{center}
	In all cases, we have $f \in X_{L_k(\fg)}$.
	
	\begin{claim}\label{clm:sing-D}
		Suppose Conjecture \ref{conj:main} holds. In the cases listed in the following table, $f \in X_{L_k(\fg)}$ and $\operatorname{Sing}$(\ref{eqn:collapse-AVN}) holds, that is 
		\begin{equation*}
			\operatorname{Sing}\big( X_{L_k(\fg)} \cap \cN \cap \BS_f, f \big)
			= \operatorname{Sing}\big( X_{L_{k^\natural}(\fg^\natural)} \cap \cN^\natural, 0 \big).
		\end{equation*}
		\begin{center}
			\begin{tabular}{c|ll|l}
				Conditions for $(q,a,b,c,d)$
				& $X_{L_k(\fg)} \cap \cN$ & $\BO_f$ & $X_{L_{k^\natural}(\fg^\natural)} \cap \cN^\natural$
				\\ \hline
				$(\text{odd},0,1,\text{even},\text{any})$ & $[q,(q-1)^c,d]$ & $[q,1^r]$ & $[1] \times [(q-1)^c,d]$
				\\ 
				$(\text{odd},0,1,\text{odd},\text{any})$ & $[q,(q-1)^{c-1},q-2,d,1]$ & $[q,1^r]$ & $[1] \times [(q-1)^{c-1},q-2,d,1]$
				\\
				$(3,1,1,\text{even},1)$ & $[3^3,2^{c-1},1]$ & $[3^3,1^{2c+1}]$ & $[1^3] \times [2^{c-1},1]$
				\\
				$(3,1,1,\text{odd},1)$ & $[3^3,2^{c-1},1^3]$ & $[3^3,1^{2c+1}]$ & $[1^3] \times [2^{c-1},1^3]$
			\end{tabular}
		\end{center}
	\end{claim}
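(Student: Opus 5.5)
The plan mirrors the proof of Claim \ref{clm:sing-A}: read off both sides from the tables above, then reduce the left side to a smaller orthogonal nilpotent cone via Kraft--Procesi row/column removal.

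First I specialize the tables of \ref{cls:collapse-D} (case $q$ odd) to each row of the claim to identify both sides of (\ref{eqn:collapse-AVN}).

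\begin{itemize}
\item For $(a,b)=(0,1)$ we have $l=1$ and $\fso_l=\fso_1=0$, so the first factor on the right is the point $[1]$.
\item The left side is then $\overline{\BO_{[q,(q-1)^c,d]}}$ or $\overline{\BO_{[q,(q-1)^{c-1},q-2,d,1]}}$, depending on the parity of $c$. This comes from the (even,even) and (even,odd) rows with $a=0$, $b=1$, after the simplification $q-1+b=q$.
\item For $q=3$, $a=b=1$ we have $l=3$ and $\fso_3\cong\fsl_2$. Since $a$ is odd, the $\fso_3$ column gives $[(q-1)^{0},q-2,b,1]=[1^3]$, the zero orbit.
\item In this case the left side comes from the (odd,$\ast$) rows with $q=3$, $d=1$, $2q-3=3$, $q-1+b=3$, $q=3$, giving $[3^3,2^{c-1},1]$ or $[3^3,2^{c-1},1^3]$.
\end{itemize}

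In every row, $\BO_f$ lies in the closure of the left orbit by the dominance order on partitions, which gives $f\in X_{L_k(\fg)}$. As in Claim \ref{clm:sing-A}, $\overline{\BO}\cap\BS_f$ is a transversal slice, so $\operatorname{Sing}(\overline{\BO}\cap\BS_f,f)=\operatorname{Sing}(\overline{\BO},\BO_f)$.

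Next I apply the row/column cancellation rule of Kraft--Procesi for orthogonal orbits \cite{Kraft-Procesi:sing-BCD}. If two $\epsilon$-partitions $\lambda\ge\mu$ share their first rows (or first columns), removing them gives $\operatorname{Sing}(\overline{\BO_\lambda},\BO_\mu)=\operatorname{Sing}(\overline{\BO_{\lambda'}},\BO_{\mu'})$. Here $\lambda',\mu'$ are the reduced partitions, read as orthogonal or symplectic according to the parity of what was removed.

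\begin{itemize}
\item In the rows with $a=0$, the two partitions $[q,\dots]$ and $[q,1^r]$ share the first row $q$. Removing it leaves $\lambda'=[(q-1)^c,d]$ (resp.\ $[(q-1)^{c-1},q-2,d,1]$) and $\mu'=[1^r]$. The row $q$ is odd, so the type stays orthogonal and $\lambda'$ is a partition of $r$ for $\fso_r$. The result is $\operatorname{Sing}(\overline{\BO_{\lambda'}},0)$ in $\fso_r$, which is exactly the right side, since the $[1]$ factor is a point.
\item In the $q=3$ rows, the two partitions share the first three rows $3,3,3$. Removing them leaves $[2^{c-1},1]$ or $[2^{c-1},1^3]$ against $[1^{2c+1}]$, a partition of $r=2c+1$. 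Again the type stays orthogonal, because an odd number of odd rows was removed. This gives $\operatorname{Sing}(\overline{\BO_{[2^{c-1},1]}},0)$ in $\fso_r$, matching $[1^3]\times[2^{c-1},1]$, since $[1^3]$ is the zero orbit of $\fso_3$ and so contributes a point.
\end{itemize}

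The main obstacle is bookkeeping rather than substance. First, the Kraft--Procesi cancellation must be applied with the correct $\epsilon$ after the removal. Second, the reduced partitions must be valid orthogonal partitions, meaning even parts occur with even multiplicity; this holds, using that $r$ is odd and $d$ is odd or $0$ as forced by $\gcd(r,q-1)=1$. Third, the Spin versus $\mathrm{SO}$ distinction of Remark \ref{rmk:Spin-vs-SO} and very even orbits must be handled. No very even orbits occur here because all partitions contain odd parts, so orbit closures in $\fso_{2n}$ are determined by partitions.
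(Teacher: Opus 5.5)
Your approach is the same as the paper's. The paper reads the partitions off the tables in \ref{cls:collapse-D} and applies the row-removal theorem \cite[\S12.3 Theorem]{Kraft-Procesi:sing-BCD}. Rows one, two and four of the claim go through exactly as you describe.

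There is, however, a bookkeeping error in the third row, which your own validity check should have caught.
\begin{itemize}
\item Specializing the (odd,even) row of the table to $q=3$, $a=b=1$, $l=3$, $d=1$ gives $(q-1)^{c+l-a-2}=2^{c}$. So the dense orbit is $[3^3,2^{c},1]$, not $[3^3,2^{c-1},1]$. Likewise, the $\fso_r$ column gives $[(q-1)^c,d]=[2^c,1]$.
\item The entries $[3^3,2^{c-1},1]$ and $[2^{c-1},1]$ in the statement are a misprint. They are partitions of $2c+8$ and $2c-1$, rather than of $2n=2c+10$ and $r=2c+1$. Moreover, for $c$ even the part $2$ occurs with odd multiplicity, so they are not orthogonal partitions at all.
\item Hence your assertions that $[2^{c-1},1]$ is a partition of $r=2c+1$, and that the reduced partitions are valid orthogonal partitions, are false as written.
\item With $2^{c}$ in place of $2^{c-1}$ on both sides, removing the three rows $3,3,3$ yields the pair $([2^c,1],[1^{2c+1}])$ in $\fso_r$, and your argument then goes through verbatim.
\end{itemize}

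A smaller point concerns the type after cancellation. It does not depend on the parity of the removed rows, nor on how many were removed. Row removal always preserves $\varepsilon$; only removing $s$ columns changes it, to $(-1)^s\varepsilon$. What the oddness of the removed rows ($q$, resp.\ $3,3,3$) actually guarantees is that the truncated partitions still have every even part with even multiplicity. That is why they remain orthogonal partitions of $r$.
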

	
	With the explicit partitions listed here, the claim follows from \cite[\S12.3 Theorem]{Kraft-Procesi:sing-BCD}. \qed

	\begin{claim}
		Suppose Conjecture \ref{conj:main} holds. Then in the setup of \ref{cls:collapse-D} (without  the conditions in Claim \ref{clm:sing-D}), we have
		\begin{enumerate}
			\item $f \in X_{L_k(\fg)}$;
			\item $L_k(\fg)$, $\cW_k(\fg,f)$, and $L_{k^\natural}(\fg^\natural)$ are quasi-lisse;
			\item $\dim$(\ref{eqn:collapse-AV}) and $\dim$(\ref{eqn:collapse-AVN}) hold, that is, $\dim X_{L_k(\fg)} \cap \BS_f = \dim X_{L_{k^\natural}(\fg^\natural)}$.
		\end{enumerate}
	\end{claim}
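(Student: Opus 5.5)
Assuming Conjecture \ref{conj:main}, all three parts reduce to facts already in the two tables, plus two dimension identities that can be checked case by case.

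\emph{Parts (1) and (2).} For part (1), I will read containment off the tables. In every row, the partition $\lambda$ of the dense orbit in $X_{L_k(\fg)}\cap\cN$ dominates $[q^l,1^r]$. Indeed, $\lambda$ has at least $l$ parts of size $\ge q-1$, namely the parts $2q-2$, $2q-3$, $q-1+b$, $q$ and $(q-1)^{\cdots}$. I will compare partial sums: for $i\le l$ the sum of the first $i$ parts of $\lambda$ is at least $iq$, and beyond that the tail of $\lambda$ is at least the tail of $[1^r]$. This gives $\BO_f\subseteq\overline{\bd^{(u)}\check\BO(m)}\subseteq X_{L_k(\fg)}$.

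For part (2), it has already been observed that $\check\BO(2n-l)$ (for $q$ odd) and $\check\BO(2n-2-l)$ (for $q$ even) are distinguished. So by the Corollary of Conjecture \ref{conj:main}, $X_{L_k(\fg)}\subseteq\cN$. Then $X_{H^0_f(L_k)}=X_{L_k}\cap\BS_f\subseteq\cN\cap\BS_f$ has finitely many symplectic leaves. Since $\cW_k(\fg,f)$ is a quotient of $H^0_f(L_k)$, it is quasi-lisse as well. The factors of $L_{k^\natural}(\fg^\natural)$ are admissible by assumption, so they are quasi-lisse by \cite{Arakawa:C2}.

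\emph{Part (3).} Since everything is quasi-lisse, $\dim$(\ref{eqn:collapse-AV}) and $\dim$(\ref{eqn:collapse-AVN}) coincide. Also $\fz(\fl)=0$ and $\fg^\natural$ has no abelian factor, so no Heisenberg term appears. Using the earlier formula $\dim X_{L_k}\cap\BS_f=\dim\bd^{(u)}\check\BO(m)-\dim\BO_f$, the claim becomes
\begin{equation*}
\dim\BO_\lambda-\dim\BO_{[q^l,1^r]}=\dim\BO_{\mu}+\dim\BO_{\nu},
\end{equation*}
where $\mu,\nu$ are the partitions in the last two columns, taken in $\fso_l,\fso_r$ or $\fsp_l,\fso_r$. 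I will use the formulas
\begin{equation*}
\dim\BO_\lambda=2n^2-n-\tfrac12\Big(\textstyle\sum_i(\lambda_i^t)^2-\#\{j:\lambda_j\text{ odd}\}\Big)
\end{equation*}
for $\fso_{2n}$, with the analogous formulas for $\fso_r$ and $\fsp_l$ (sign $+$ on the odd-part count, or on the even-part count respectively), from \cite[Corollary 6.1.4]{Collingwood-McGovern}.

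The cleanest route is through transposes. In each row, $\lambda$ is essentially the ``column-wise sum'' of $[q^l,1^r]$-type data with $\mu$ and $\nu$, up to the small corrections $2q-3$, $q-2$, $d-1,1$. Concretely, I expect $\lambda^t_j=\mu^t_j+\nu^t_j+l$ for the relevant $j\le q-1$, with the first column also absorbing the $1^r$. Substituting this into $\sum(\lambda^t_j)^2$, the cross terms $2l(\mu^t_j+\nu^t_j)$ summed over $j$ give $2l(l+r)$-type quantities. These should cancel against $\dim\BO_{[q^l,1^r]}$ and the difference $\dim\fg-\dim\fg^\natural$. The remaining cross term $2\sum\mu^t_j\nu^t_j$ must match the difference $\dim\fso_{2n}-\dim\fso_l-\dim\fso_r$ minus the codimension contributions.

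\emph{Main obstacle.} The hard part is the bookkeeping of the parity-dependent rows and the odd/even-part counts. In the $(\text{odd},\cdot)$ rows the parts $2q-3$ and $q$ replace $2q-2$ and $q-1$, and these corrections change the column sums by one in two places. I would handle them by first proving the identity in the generic (even,even) row. Then I would show that each modification, $(2q-2,q-1)\to(2q-3,q)$ on the $\fso_{2n}$ side and $(q-1,b)\to(q-2,b,1)$ on the $\fso_l$ side, changes both sides of the identity by the same amount; the same check applies to $(q-1,d)\to(q-2,d,1)$ on the $\fso_r$ side. If the transposes do not line up as expected, I would fall back to direct verification of each row with the explicit formula, treating $a,b,c,d$ as parameters.
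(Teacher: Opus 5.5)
Your route is the paper's. Parts (1) and (2) are what the paper records in the setup of \ref{cls:collapse-D}: distinguishedness of $\check\BO(2n-l)$, resp.\ $\check\BO(2n-2-l)$, plus the Corollary; admissibility on the $\fg^\natural$ side; and dominance, which is the closure order here because $[q^l,1^r]$ is not very even. Part (3) is, as in the paper, a direct check with the Collingwood--McGovern dimension formula. However, the mechanism you predict for (3) is false, and your $\fsp_l$ formula is misstated. Your fallback of direct verification would rescue the argument, but here is the structure that makes it clean.

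\emph{Transposes.} In general $\lambda^t_j\neq\mu^t_j+\nu^t_j+l$. For example, $q=3$, $l=1$, $r=5$ gives $\lambda=[3,2,2,1]$ and $\lambda^t=[4,3,1]$, whereas $\mu^t+\nu^t+l=[5,3]$. What holds in every row of both tables is
\begin{equation*}
\lambda=\big([(q-1)^l]+\mu\big)\cup\nu.
\end{equation*}
That is, add $q-1$ to each part of $\mu$ (padded by zeros to length $l$), then adjoin the parts of $\nu$. For instance,
\begin{equation*}
[(q-1)^l]+[(q-1)^{a-1},q-2,b,1]=[(2q-2)^{a-1},2q-3,q-1+b,q,(q-1)^{l-a-2}].
\end{equation*}
Since all parts of $\mu$ and $\nu$ are at most $q-1$, the columns of $\mu$ are appended rather than added:
\begin{equation*}
\lambda^t=(\nu^t_1+l,\dots,\nu^t_{q-1}+l,\mu^t_1,\dots,\mu^t_{q-1}).
\end{equation*}
Hence
\begin{equation*}
\sum_j(\lambda^t_j)^2=\sum_j(\mu^t_j)^2+\sum_j(\nu^t_j)^2+2lr+(q-1)l^2 .
\end{equation*}
The transpose $(l+r,l^{q-1})$ of $[q^l,1^r]$ has square-sum $(l+r)^2+(q-1)l^2$. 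So the $2lr$ cancels, and there is no $\mu$--$\nu$ cross term to match; a surviving $2\sum_j\mu^t_j\nu^t_j$, as in your plan, would make the identity fail.

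\emph{Parity terms.} If $q$ is odd, then $q-1$ is even. So $\#\mathrm{odd}(\lambda)=\#\mathrm{odd}(\mu)+\#\mathrm{odd}(\nu)$, and $[q^l,1^r]$ has $l+r$ odd parts. If $q$ is even, a row $(q-1)+\mu_i$ is odd exactly when $\mu_i$ is even, padded zeros included. So $\#\mathrm{odd}(\lambda)=l-\#\mathrm{odd}(\mu)+\#\mathrm{odd}(\nu)$, and $[q^l,1^r]$ has $r$ odd parts. Substituting gives
\begin{equation*}
\dim\BO_\lambda-\dim\BO_f=\Big(\tfrac{l^2\mp l}2-\tfrac12\sum_j(\mu^t_j)^2\pm\tfrac12\#\mathrm{odd}(\mu)\Big)+\Big(\tfrac{r^2-r}2-\tfrac12\sum_j(\nu^t_j)^2+\tfrac12\#\mathrm{odd}(\nu)\Big).
\end{equation*}
This is $\dim\BO_\mu+\dim\BO_\nu$ in $\fso_l\times\fso_r$ (upper signs, $q$ odd) or in $\fsp_l\times\fso_r$ (lower signs, $q$ even), uniformly in all rows. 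The ``corrections'' $2q-3$, $q$, $d-1,1$ are simply parts of $\mu$ and $\nu$, so no generic-row-plus-perturbation bookkeeping is needed.

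\emph{The $\fsp_l$ formula.} The correct formula is
\begin{equation*}
\dim\BO_\mu=\dim\fsp_l-\tfrac12\sum_j(\mu^t_j)^2-\tfrac12\#\{j:\mu_j\text{ odd}\},
\end{equation*}
with a minus sign on the odd-part count. A plus sign on the even-part count gives, e.g., $\dim\BO_{[2]}=5/2$ in $\fsp_2$. This minus sign is exactly what absorbs the $l-\#\mathrm{odd}(\mu)$ above. With these two fixes, your plan is a complete proof.
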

	
	This is again a straightforward verification using the dimension formula of nilpotent orbits. \qed	
\end{clause}

\begin{clause}[Type $E$]\label{cls:collapse-E}
	In Table \ref{tbl:E6}-\ref{tbl:E8} we collect collapsing non-admissible levels in type $E$. The data are either taken from \cite[Appendix A]{AACLMMP}, \cite{Arakawa:C2}, \cite[Table 6]{AFK}, or are computed using the \textsf{atlas} software. The table headings have the following meanings:
	\begin{itemize}
		\item The first three columns are the Bala-Carter label for the orbit $\BO_f$, the dimension of $\BO_f$, and the number $k + \check \Bh = \frac mu$.
		
		\item The fourth column contains the Bala-Carter label of the conjectural unique nilpotent orbit inside $X_{L_k(\fg)} \cap \cN$.
		
		\item The fifth column contains a pair of numbers $(d,r)$ where $d = \dim X_{L_k(\fg)} \cap \cN$ and $r = \dim X_{L_k(\fg)} \cap \fg_{ss}$ are computed based on our conjecture. We have $\dim X_{L_k(\fg)} = d+r$ by (\ref{eqn:dim-of-sheet}). If $r = 0$, then we write the number $d$ only.
		
		\item The sixth column contains the decomposition $L_{k^\natural}(\fg^\natural)$ into a tensor product of simple vertex algebras.
		
		\item The seventh and eighth columns are similar to the fourth and the fifth, but for $L_{k^\natural}(\fg^\natural)$ instead of $L_k(\fg)$. 
		
		\item The last column contains the descriptions of $X_{L_k(\fg)} \cap \cN$, whenever they are computed in \cite{Fu-Juteau-Levy-Sommers}.% (see \ref{cls:sing} for notations).
	\end{itemize}
	The equalities $\dim$(\ref{eqn:collapse-AV}) and $\dim$(\ref{eqn:collapse-AVN}) are equivalent to
	\begin{equation*}
		d - \dim \BO_f = d^\natural \text{ and } r = r^\natural.
	\end{equation*}
	This is true in all cases listed. Moreover, (\ref{eqn:collapse-AVN}) also holds whenever the singularities are known.
\end{clause}

\begin{table}[p]
	\caption{Collapsing non-admissible levels in $E_6$}\label{tbl:E6}
	\resizebox{\textwidth}{!}{
	%{\small
	\begin{tabular}{c|c|c||c|c||c|c|c||c}
	$f$ & $\dim \BO_f$ & $k+\check \Bh$ & $X_{L_k(\fg)} \cap \cN$ & $(d,r)$ & $L_{k^\natural}(\fg^\natural)$ & $X_{L_{k^\natural}(\fg^\natural)} \cap \cN^\natural$ & $(d^\natural,r^\natural)$ & sing.
	\\ \hline
	%--------------
	$E_6(a_1)$&$70$ & $9/7$ & $E_6(a_1)$ & $70$ & $\BC$ & $0$ & $0$ & $\ast$
	\\ \hline
	$D_5$&$68$ & $8/5$ & $D_5$ & $(68,1)$ & $L_{6/5}(\mathfrak{gl}_1)$ & $0$ & $(0,1)$ & $\ast$
	\\ \hline
	%$A_5$&$64$ & $9/5$ & $E_6(a_3)$ & $66$ & $L_{-2+3/10}(\fsl_2)$ & $[2]$ & $2$ & $A_1$
	%\\ \hline
	$D_4$ & $60$ & $9/4$ & $D_5(a_1)$ & $64$ & $L_{-3+3/2}(\fsl_3)$ & $[2,1]$ & $4$ & $a_2$
	\\ \hline
	$A_4$ & $60$ & $8/3$ & $A_4+A_1$ & $(62,1)$ & $L_{2}(\fgl_1)\otimes L_{-2+2/3}(\fsl_2)$ & $0 \times [2]$ & $(2,1)$ & $A_1$
	\\ \hline
	$A_3$ & $52$ & $4$ & $D_4(a_1)$ & $(58,2)$ & $L_3(\fgl_1) \otimes L_{-3+2}(\fso_5)$ & ? & ?
	\\ \hline
	$A_2+2A_1$ & $50$ & $9/2$ & $A_2+2A_1$ & $50$ & $L_0(\fgl_1) \otimes L_{-2+2}(\fsl_2)$ & $0 \times 0$ & $0$ & $\ast$
	\\ \hline
	$A_2$ & $42$ & $9/2$ & $A_2+2A_1$ & $50$ & $L_{-3+3/2}(\fsl_3)\otimes L_{-3+3/2}(\fsl_3)$ & $[2,1]\times [2,1]$ & $8$ & 
	\\ \cline{3-9}
	& & $6$ & $A_2$ & $42$ & $L_{-3+3}(\fsl_3) \otimes L_{-3+3}(\fsl_3)$ & $0 \times 0$ & $0$ & $\ast$
	\\ \hline
	$3A_1$ & $40$ & $6$ & $A_2$ & $42$ & $L_{-2+3/2}(\fsl_2)\otimes L_{-3+3}(\fsl_3)$ & $[2] \times 0$ & $2$ & $A_1$
	\\ \hline
	$2A_1$ & $32$ %& $9/2$ & $A_2+2A_1$ & $50$ & $L_{-9/2}(\fgl_1)\otimes L_{-5+3/2}(\fso_7)$ & ? & (?,$\ge1$)
	%\\ \cline{3-9}
	%&  
	& $6$ & $A_2$ & $42$ &  $L_{0}(\fgl_1)\otimes L_{-5+3}(\fso_7)$  & $0 \times [3,1^4]$ & $10$
	\\ \hline 
	$A_1$ & $22$ & $9$ & $A_1$ & $22$ & $L_{-6+6}(\fsl_6)$ & $0$ & $0$ & $\ast$
	\\ \cline{3-9}
	&  & $8$ & $2A_1$ & $(32,1)$ & $L_{-6+5}(\fsl_6)$ & $[2,1^4]$ & $(10,1)$ & $a_5$
	\end{tabular}
	}
	\vspace{3\baselineskip}
	\caption{Collapsing non-admissible levels in $E_7$.}\label{tbl:E7}
	\resizebox{\textwidth}{!}{
	%{\small
	\begin{tabular}{c|c|c||c|c||c|c|c||c}
		$f$ & $\dim \BO_f$ & $k+\check \Bh$ & $X_{L_k(\fg)} \cap \cN$ & $(d,r)$ & $L_{k^\natural}(\fg^\natural)$ & $X_{L_{k^\natural}(\fg^\natural)} \cap \cN^\natural$ & $(d^\natural,r^\natural)$ & sing.
		\\ \hline
		%-----------------
		$E_7(a_1)$ & $124$ & $14/11$ & $E_7(a_1)$ & $124$ & $\BC$ & $0$ & $0$ & $\ast$
		\\ \hline
		%$E_7(a_2)$ & $122$ & $14/9$ & $E_7(a_2)$ & $122$ & $\BC$ & $0$ & $0$ & $\ast$
		%\\ \hline
		$E_6$ & $120$ & $14/9$ & $E_7(a_2)$ & $122$ & $L_{-2+2/3}(\fsl_2)$ & $[2]$ & $2$ & $A_1$
		\\ \hline
		$E_6(a_1)$ & $118$ & $9/4$ & $E_6(a_1)$ & $(118,1)$  & $L_{3/2}(\fgl_1)$ & $0$ & $(0,1)$ & $\ast$
		\\ \hline
		$D_6$ & $118$ & $7/4$ & $E_7(a_3)$ & $(120,1)$ & $L_{-2+1/4}(\fsl_2)$ & $[2]$ & $(2,1)$ & $A_1$
		\\ \hline
		$A_6$ & $114$ & $7/3$ & $E_7(a_4)$ & $(116,1)$ & $L_{-2+1/3}(\fsl_2)$ & $[2]$ & $(2,1)$ & $A_1$
		\\ \hline
		$D_5$ & $112$ & $7/3$ & $E_7(a_4)$ & $(116,1)$ & $L_{-2+1/3}(\fsl_2)\otimes L_{-2+2/3}(\fsl_2)$ & $[2]\times [2]$ & $(4,1)$ & $\cN_{2A_1}$ 
		\\ \hline
		$(A_5)'$ & $108$ & $10/3$ & $E_6(a_3)$ & $110$ & $L_{-2+5/6}(\fsl_2) \otimes L_{-2+2}(\fsl_2)$ & $[2] \times 0$ & $2$ & $A_1$
		\\ \hline
		$A_4+A_2$ & $106$ & $13/3$ & $A_4+A_2$ & $106$ & $L_{-2+13}(\fsl_2)$ & $0$ & $0$ & $\ast$
		\\ \cline{3-9}
		& & $7/2$ & $D_5(a_1)+A_1$ & $(108,1)$ & $L_{-2+1/2}(\fsl_2)$ & $[2]$ & $(2,1)$ & $A_1$
		\\ \hline
		$(A_5)''$ & $102$ & $14/5$ & $E_7(a_5)$ & $112$ & $L_{-4+4/5}(\fg_2)$ & $G_2(a_1)$ & $10$ 
		\\ \hline
		$A_3+A_2+A_1$ & $100$ & $ 14/3$ & $A_3+A_2+A_1$ & $100$ & $L_{-2+2}(\fsl_2)$ & $0$ & $0$ & $\ast$
		\\ \cline{3-9}
		& & $11/2$ &  $A_3+A_2+A_1$ & $100$ & $L_{-2+22}(\fsl_2)$ & $0$ & $0$ & $\ast$
		\\ \hline
		$A_4$ & $100$ & $9/2$ & $A_4+A_1$ & $(104,1)$ & $L_{27}(\fgl_1)\otimes L_{-3+3/2}(\fsl_3)$ & $0 \times [2,1]$ & $(4,1)$ & $a_2$
		\\ \hline
		$D_4$ & $96$ & $7/2$ & $D_5(a_1)+A_1$ & $(108,1)$ & $L_{-4+3/2}(\fsp_6)$ & $[2^3]$ & $(12,1)$
		\\ \hline
		$A_2+3A_1$ & $84$ & $15/2$ & $A_2+3A_1$ & $84$ & $L_{-4+5}(\fg_2)$ & $0$ & $0$ & $\ast$
		\\ \cline{3-9}
		& & $6$ & $D_4(a_1)$ & $94$ & $L_{-4+2}(\fg_2)$ & $G_2(a_1)$ & $10$
		\\ \hline
		$2A_2$ & $84$ & $6$ & $D_4(a_1)$ & $94$ & $L_{-2+2}(\fsl_2)\otimes L_{-4+2}(\fg_2)$ & $0 \times G_2(a_1)$ & $10$ 
		\\ \hline
		$A_3$ & $84$ & $14/3$ & $A_3+A_2+A_1$ & $100$ & $L_{-2+2/3}(\fsl_2)\otimes L_{-5+5/3}(\fso_7)$ & $[2]\times[3^2,1]$ & $16$ 
		\\ \cline{3-9}
		& & $6$ & $D_4(a_1)$ & $94$ & $L_{-2+2}(\fsl_2)\otimes L_{-5+3}(\fso_7)$ & $0 \times [3,1^4]$ & $10$ 
		\\ \hline
		$A_2+2A_1$ & $82$ & $7$ & $A_2+3A_1$ & $(84,1)$ & $L_{-2+1}(\fsl_2)\otimes L_{-2+2}(\fsl_2)\otimes L_{-2+2}(\fsl_2)$ & $[2] \times 0 \times 0$ & $(2,1)$ & $A_1$
		\\ \cline{3-9}
		& & $15/2$ & $A_2+3A_1$ & $84$ & $L_{-2+3/2}(\fsl_2)\otimes L_{-2+3}(\fsl_2)\otimes L_{-2+5}(\fsl_2)$ & $[2] \times 0 \times 0$ & $2$ & $A_1$
		\\ \cline{3-9}
		& & $8$ & $A_2+2A_1$ & $82$ & $L_{-2+2}(\fsl_2)\otimes L_{-2+4}(\fsl_2)\otimes L_{-2+8}(\fsl_2)$ & $0 \times 0 \times 0$ & $0$ & $\ast$
		\\ \hline
		$A_2$ & $66$ & $7$ & $A_2+3A_1$ & $(84,1)$ & $L_{-6+3}(\fsl_6)$ & $[2^3]$ & $(18,1)$ 
		\\ \cline{3-9}
		& & $9$ & $A_2+A_1$ & $(76,1)$ & $L_{-6+5}(\fsl_6)$ & $[2,1^4]$ & $(10,1)$ & $a_5$
		\\ \hline
		$(3A_1)'$ & $64$ & $10$ & $A_2$ & $66$ & $L_{-2+5/2}(\fsl_2)\otimes L_{-4+4}(\fsp_6)$ & $[2]\times 0$ & $2$ & $A_1$
		\\ \hline
		$(3A_1)''$ & $54$ & $7$ & $A_2+3A_1$ & $(84,1)$ & $L_{-9+4}(\ff_4)$ & ? & ? 
		\\ \hline
		$2A_1$ & $52$ & $12$ & $2A_1$ & $52$ & $L_{-2+4}(\fsl_2)\otimes L_{-7+7}(\fso_9)$ & $0 \times 0$ & $0$ & $\ast$
		\\ \cline{3-9}
		&  & $10$ & $A_2$ & $66$ & $L_{-2+2}(\fsl_2)\otimes L_{-7+5}(\fso_9)$ & $0 \times [3,1^6]$ & $14$
		\\ \hline
		$A_1$ & $34$ & $12$ & $2A_1$ & $52$ & $L_{-10+8}(\fso_{12})$ & $[2^2, 1^8]$ & $18$ & $d_6$
		\\ \cline{3-9}
		&& $14$ & $A_1$ & $34$ & $L_{-10+10}(\fso_{12})$ & $0$ & $0$ & $\ast$
	\end{tabular}
	}
\end{table}

\begin{table}[h]
	\caption{Collapsing non-admissible levels in $E_8$.}\label{tbl:E8}
\resizebox{\textwidth}{!}{
%{\small
	\begin{tabular}{c|c|c||c|c||c|c|c||c}
		$f$ & $\dim \BO_f$ & $k+\check \Bh$ & $X_{L_k(\fg)} \cap \cN$ & $(d,r)$ & $L_{k^\natural}(\fg^\natural) $ & $X_{L_{k^\natural}(\fg^\natural)}\cap \cN^\natural$ & $(d^\natural,r^\natural)$ & sing.
		\\ \hline
		%------------------
		$E_8(a_1)$ & $238$ & $24/19$ & $E_8(a_1)$ & $238$ & $\BC$ & $0$ & $0$ & $\ast$
		\\ \hline
		$E_8(a_2)$ & $236$ & $20/13$ & $E_8(a_2)$ & $236$ & $\BC$ & $0$ & $0$ & $\ast$
		\\ \hline
		$E_8(a_4)$ & $232$ & $15/7$ & $E_8(a_4)$ & $232$ & $\BC$ & $0$ & $0$ & $\ast$
		\\ \hline
		$E_7$ & $232$ & $12/7$ & $E_8(a_3)$ & $234$ & $L_{-2+3/14}(\fsl_2)$ & $[2]$ & $2$ & $A_1$
		\\ \hline
		$E_7(a_1)$ & $228$ & $24/11$ & $E_8(b_4)$ & $230$ & $L_{-2+2/11}(\fsl_2)$ & $[2]$ & $2$ & $A_1$
		\\ \cline{3-9}
		& & $20/9$ & $E_8(b_4)$ & $230$ & $L_{-2+2/9}(\fsl_2)$ & $[2]$ & $2$ & $A_1$
		\\ \hline
		$D_7$ & $226$ & $18/7$ & $E_8(a_5)$ & $228$ & $L_{-2+9/14}(\fsl_2)$ & $[2]$ & $2$ & $A_1$
		\\ \hline
		$A_7$ & $218$ %& $24/7$ & $E_8(b_6)$ & $220$ & $L_{-2+3/14}(\fsl_2)$ & $[2]$ & $2$ & $A_1$
		%\\ \cline{3-9}
		%& 
		& $ 15/4$ & $E_8(b_6)$ & $220$ & $L_{-2+3/2}(\fsl_2)$ & $[2]$ & $2$ & $A_1$
		\\ \hline
		$E_6$ & $216$ & $8/3$ & $E_8(b_5)$ & $(226,1)$ & $L_{-4+2/3}(G_2)$ & ? & ? 
		\\ \hline
		$D_6$ & $216$ & $10/3$ & $E_8(a_6)$ & $224$ & $L_{-3+5/6}(\fso_5)$ & $[5]$ & $8$ & $\cN_{C_2}$
		\\ \hline
		$E_6(a_1)$ & $214$ & $24/7$ & $E_8(b_6)$ & $220$ & $L_{-3+3/7}(\fsl_3)$ & $[3]$ & $6$ & $\cN_{A_2}$
		\\ \cline{3-9}
		& &  $15/4$ & $E_8(b_6)$ & $220$ & $L_{-3+3/4}(\fsl_3)$ & $[3]$ & $6$ & $\cN_{A_2}$
		\\ \hline
		$D_6(a_2)$ & $204$ & $6$ & $E_8(a_7)$ & $208$ & $L_{-2+3/2}(\fsl_2)\otimes L_{-2+3/2}(\fsl_2)$ & $[2]\times [2]$ & $4$ 
		\\ \hline
		$D_5$ & $200$ & $4$ & $D_7(a_2)$ & $(216,2)$ & $L_{-5+1}(\fso_7)$ & ? & ? 
		\\ \hline
		$A_4+A_2$ & $194$ & $20/3$ & $A_4+A_2+A_1$ & $196$ & $L_{-2+2}(\fsl_2)\otimes L_{-2+2/3}(\fsl_2)$ & $0 \times [2]$ & $2$ & $A_1$
		\\ \hline
		$D_4(a_1)+A_2$ & $184$ & $25/3$ & $D_4(a_1)+A_2$ & $184$ & $L_{-3+5}(\fsl_3)$ & $0$ & $0$ & $\ast$
		\\ \cline{3-9}
		& & $9$ & $D_4(a_1)+A_2$ & $184$ & $L_{-3+8}(\fsl_3)$ & $0$ & $0$ & $\ast$
		\\ \hline
		$A_3+A_2+A_1$ & $182$ & $19/2$ & $A_3+A_2+A_1$ & $182$ & $L_{-2+2}(\fsl_2)\otimes L_{-2+38}(\fsl_2)$ & $0 \times 0$ & $0$ & $\ast$
		%\\ \cline{3-9}
		%& & $8$ & $D_4(a_1)+A_2$ & $(184,1)$ & $L_{-2+1/2}(\fsl_2)\otimes L_{-2+2}(\fsl_2)$ & $[2] \times 0$ & $(2,1)$ & $A_1$
		\\ \hline
		$A_4$ & $180$ & $20/3$ & $A_4+A_2+A_1$ & $196$ & $L_{-5+5/3}(\fsl_5)$ & $[3,2]$ & $16$ 
		\\ \cline{3-9}
		& & $15/2$ & $A_4+2A_1$ & $192$ & $L_{-5+5/2}(\fsl_5)$ & $[2^2,1]$ & $12$ 
		\\ \hline
		$D_4$ & $168$ & $6$ & $E_8(a_7)$ & $208$ & $L_{-9+3}(F_4)$ & NA & NA 
		\\ \hline
		$2A_2$ & $156$ & $12$ & $2A_2$ & $156$ & $L_{-4+4}(\fg_2) \otimes L_{-4+4}(\fg_2)$ & $0 \times 0$ & $0$ & $\ast$
		\\ \hline
		$A_3$ & $148$ & $8$ & $D_4(a_1)+A_2$ & $(184,1)$ & $L_{-9+3}(\fso_{11})$ & NA & NA 
		\\ \cline{3-9}
		& & $10$ & $D_4(a_1)+A_1$ & $176$ & $L_{-9+5}(\fso_{11})$ & NA & NA 
		\\ \hline
		$A_2+2A_1$ & $146$ & $12$ & $2A_2$ & $156$ & $L_{-2+2}(\fsl_2)\otimes L_{-5+3}(\fso_7)$ & $0 \times [3,1^4]$ & $10$ 
		%\\ \cline{3-9}
		%& & $25/2$ & $A_2+3A_1$ & $154$ & $L_{-2+5}(\fsl_2)\otimes L_{-5+7/2}(\fso_7)$ & $0 \times [2^2,1^3]$ & $8$ & $b_3$
		\\ \cline{3-9}
		& & $14$ & $A_2+2A_1$ & $146$ & $L_{-2+14}(\fsl_2)\otimes L_{-5+5}(\fso_7)$ & $0 \times 0$ & $0$ & $\ast$
		\\ \hline
		$4A_1$ & $128$ & $15$ & $A_2+A_1$ & $136$ & $L_{-5+9/2}(\fsp_8)$ & $[2,1^6]$ & $8$ & $c_4$
		\\ \hline
		$A_2$ & $114$ & $12$ & $2A_2$ & $156$ & $L_{-12+6}(\fe_6)$ & $A_2$ & $42$ 
		\\ \cline{3-9}
		& & $15$ & $A_2+A_1$ & $136$ & $L_{-12+9}(\fe_6)$ & $A_1$ & $22$ & $e_6$
		\\ \hline
		$3A_1$ & $112$ & $18$ & $A_2$ & $114$ & $L_{-2+9/2}(\fsl_2)\otimes L_{-9+9}(\ff_4)$ & $[2] \times 0$ & $2$ & $A_1$
		\\ \hline
		$2A_1$ & $92$ & $18$ & $A_2$ & $114$ & $L_{-11+9}(\fso_{13})$ & $[3,1^{10}]$ & $22$ 
		\\ \cline{3-9}
		& & $20$ & $2A_1$ & $92$ & $L_{-11+11}(\fso_{13})$ & $0$ & $0$ & $\ast$
		\\ \hline
		$A_1$ & $58$ & $20$ & $2A_1$ & $92$ & $L_{-18+14}(\fe_7)$ & $A_1$ & $34$ & $e_7$
		\\ \cline{3-9}
		& & $24$ & $A_1$ & $58$ & $L_{-18+18}(\fe_7)$ & $0$ & $0$ & $\ast$
	\end{tabular}
}
\end{table}

\printbibliography
\listoftables
\end{document}